\newtheorem{theorem}{Theorem}[section]
\newtheorem{corollary}[theorem]{Corollary}
\newtheorem{lemma}[theorem]{Lemma}
\newtheorem{proposition}[theorem]{Proposition}
\theoremstyle{definition}
\newtheorem{definition}[theorem]{Definition}
\theoremstyle{remark}
\newtheorem{remark}[theorem]{\sc Remark}
\newtheorem{example}[theorem]{\sc Example}
\newenvironment{lyxlist}[1]
{\begin{list}{}
{\settowidth{\labelwidth}{#1}
 \setlength{\leftmargin}{\labelwidth}
 \addtolength{\leftmargin}{\labelsep}
 }}
{\end{list}}
\renewcommand{\int}{{\mathrm{int}}}
\newcommand{\Sing}{\mathrm{Sing\hspace{1pt}}}
\newcommand{\grad}{\mathop{\rm{grad}}\nolimits}
\newcommand{\ity}{{\infty}}
\renewcommand{\d}{{\mathrm{d}}}
\newcommand{\supp}{{\mathrm{supp}}}
\newcommand{\e}{\varepsilon}
\newcommand{\fin}{\hspace*{\fill}$\Box$}
\newcommand{\m}{\setminus}
\newcommand{\ord}{\mathrm{ord}}
\newcommand{\bR}{{\mathbb R}}
\newcommand{\bC}{{\mathbb C}}
\newcommand{\bP}{{\mathbb P}}
\newcommand{\bN}{{\mathbb N}}
\newcommand{\bZ}{{\mathbb Z}}
\newcommand{\bx}{{\mathbf{x}}}
\newcommand{\bz}{{\mathbf{z}}}
\begin{document}

\title[Bifurcation values of mixed polynomials]{Bifurcation values and monodromy of mixed polynomials}

\date{December 14, 2011}

\author{Ying Chen}
\author{Mihai Tib\u ar}
\address{Math\'ematiques, Laboratoire Paul Painlev\'e, Universit\'e Lille 1,
59655 Villeneuve d'Ascq, France.}
\email{tibar@math.univ-lille1.fr}
\email{Ying.Chen@math.univ-lille1.fr}

\subjclass[2000]{14D06, 58K05, 57R45, 14P10, 32S20, 58K15}

\keywords{singularities of real polynomial maps, fibrations, bifurcation locus, Newton polyhedron, atypical values, regularity at infinity, semi-algebraic Sard type theorem}

\thanks{The first named author acknowledges the support of the French \textit{Agence Nationale de la Recherche}, grant ANR-08-JCJC-0118-01}

\begin{abstract}
We study  the bifurcation values of real polynomial maps $f: \bR^{2n} \to \bR^2$ which reflect the lack of asymptotic  regularity at infinity. We formulate real counterparts of some structure results which have been previously proved in case of complex polynomials by Kushnirenko, N\'emethi and Zaharia and other authors, emphasizing the typical real phenomena which occur.
\end{abstract}
\maketitle


\section{Introduction}\label{s:intro}

For a complex polynomial function $f:\mathbb{C}^{n}\rightarrow\mathbb{C}$, it is well known that there is a $C^\ity$ locally trivial fibration $f_| : \bC^n \m f^{-1}(\Lambda) \to \bC \m \Lambda$ 
over the complement of some finite subset $\Lambda \subset \bC$, see e.g. \cite{Va}, \cite{Ve}. The minimal such $\Lambda$ is called the set of \emph{bifurcation values}, or the set of \emph{atypical values},  and shall be
denoted by $B(f)$. It was studied in several papers, such as \cite{Br1},
\cite{Br2}, \cite{Ne1}, \cite{NZ1}, \cite{ST}, \cite{Pa-compo} etc. Besides the critical values of $f$,  $B(f)$ may contain other values due to the asymptotical ``bad'' behaviour at infinity.

If one keeps only the real algebraic structure and views $f$ as a map $f: \bR^{2n} \to \bR^2$, it is natural to ask what can be still proved. We study the bifurcation locus of such real polynomial maps by regarding them as real maps $\bC^n \to \bC$, called ``mixed polynomials''  by Mutsuo Oka, who has studied in a recent series of papers \cite{Oka1, Oka2, Oka3} the topology of germs of mixed polynomials and mixed hypersurfaces.

For a polynomial map $F : \bR^m \to \bR^p$, $m>p$, the \textit{bifurcation locus} $B(F)$  is the minimal set such that $F$ is a $C^\ity$  locally trivial fibration over $\bR^p \m B(F)$. For $m=2$ and $p=1$ there exists a characterisation of $B(F)$, cf  \cite{TZ}, which is more involved than the one of the corresponding complex setting, cf \cite{HL}. For higher $p>1$,  by using a certain regularity condition at infinity, Kurdyka, Orro and Simon \cite{KOS} found a closed semi-algebraic set $K(F)$ including $B(F)$ and called it the set of \textit{generalised critical values}. 
In this paper we work with the $\rho$-regularity at infinity, a condition derived from Milnor's local condition. It allows us to exhibit a certain semi-algebraic closed set $S(f)$ of asymptotic ``bad'' values which estimates from above the set of atypical values at infinity and is related to the set $K_\infty(f)$ introduced by Kurdyka, Orro and Simon \cite{KOS}. Namely, we show by Proposition \ref{p:S_contained_in_K} and Fibration Theorem \ref{t:fib} that one has the inclusions $B(f) \subset S(f)\cup f(\Sing f) \subset K(f)$. The second one appears to be strict in general; we indicate  an example where the inclusion $S(f)\subset K_\infty(f)$ is strict (Remark \ref{r:compare_reg}).

In order to get a more effective estimation of $S(f)$, we focus to the class of Newton non-degenerate mixed polynomials, as defined by Oka \cite{Oka2}. In case of a Newton non-degenerate holomorphic polynomial $f:\bC^n \to \bC$ with $f(0) =0$, N\'emethi and Zaharia \cite[Theorem 2]{NZ1} defined the set $\mathfrak{B}$ of ``bad faces'' of the support $\supp (f)$ and showed the inclusion:
\begin{equation}\label{eq:nemethizaharia}
 B(f)\subset f(\Sing f)\cup \{0\}\cup\underset{\Delta\in\mathfrak{B}}{\cup}f_{\Delta}
(\Sing f_{\Delta}\cap \bC^{*n}). 
\end{equation}

In the mixed setting, the bifurcation set turns out to be of real dimension $\le 1$. After re-defining ``bad faces'', see Definition \ref{d:badface} and Remark \ref{r:bad}, we get a similarly looking estimation of $S(f)$:

\begin{theorem}\label{t:atyp_Newton}
 Let $f: \bR^{2n} \to \bR^{2}$ be a mixed polynomial which depends effectively on all the variables and let $f(0)=0$. If $f$ is Newton non-degenerate then:
\begin{enumerate}
\item $S(f)\subset\{0\} \cup \underset{\Delta\in\mathfrak{B}} \bigcup
f_{\Delta}(\Sing f_{\Delta}\cap \bC^{*n})$.
\item  If  $f$ is moreover Newton strongly non-degenerate then $f(\Sing f)$ and $S(f)$ are bounded.
\end{enumerate}

\end{theorem}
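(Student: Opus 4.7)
The plan is to adapt the N\'emethi--Zaharia strategy \cite{NZ1} to the mixed setting: combine a Curve Selection Lemma at infinity with a Puiseux-expansion analysis of the leading asymptotic behaviour, then verify combinatorially that the selected face is bad. Part (b) will follow from (a) together with a parallel curve-selection argument applied to $\Sing f$.

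For (a), fix $t_{0}\in S(f)\m\{0\}$. Since $S(f)$ is semi-algebraic and $t_{0}$ is a limit value of $f$ witnessing failure of $\rho$-regularity at infinity, the semi-algebraic Curve Selection Lemma at infinity supplies a real analytic curve $z(s)\in\bC^{n}$, $s\in(0,\e)$, along which $\|z(s)\|\to\infty$, $f(z(s))\to t_{0}$, and the $\rho$-regularity condition fails asymptotically. Expand in Puiseux series $z_{i}(s)=a_{i}s^{q_{i}}+\text{h.o.t.}$ with $a_{i}\in\bC^{*}$, and set $J=\{i:a_{i}\neq 0\}$, $q=(q_{i})_{i\in J}$. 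Since $\|z(s)\|\to\infty$, at least one $q_{i}<0$. The vector $q$ selects a face $\Delta$ of the Newton polyhedron of $f|_{\bC^{J}}$ on which the weighted degree $\langle q,\nu+\mu\rangle$ attains its minimum. Substituting into $f$ and matching leading terms gives
\[
f(z(s))=f_{\Delta}(a)\,s^{d}+\text{h.o.t.},\qquad d=\min_{(\nu,\mu)\in\supp(f)\cap\bR^{J}}\langle q,\nu+\mu\rangle,
\]
so the condition $f(z(s))\to t_{0}\neq 0$ forces $d=0$ and $f_{\Delta}(a)=t_{0}$.

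The same Puiseux substitution, applied to the gradient conditions encoding the $\rho$-regularity failure, yields at dominant order the vanishing of both holomorphic and antiholomorphic partials $\partial f_{\Delta}/\partial z_{i}$ and $\partial f_{\Delta}/\partial \bar z_{i}$ at $a$ for every $i\in J$; hence $a\in\Sing f_{\Delta}\cap\bC^{*J}$. It remains to check that $\Delta\in\mathfrak{B}$: the existence of a negative coordinate of $q$, the fact that $\Delta$ supports a nonzero value $f_{\Delta}(a)=t_{0}$ on the torus, and the combinatorics of $\Delta$ relative to $\supp(f)$ must be matched against Definition \ref{d:badface}. I expect this combinatorial verification, tying the escape direction to the geometric position of $\Delta$, together with the mixed (both $z$ and $\bar z$) singularity condition arising from the real gradient in the $\rho$-regularity test, to be the main obstacle; it is the step at which the precise formulation of \emph{bad face} adopted in Definition \ref{d:badface} must be engaged.

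For (b), Newton strong non-degeneracy gives $\Sing f_{\Delta}\cap\bC^{*J}=\emptyset$ for every face of the Newton boundary at infinity, so in particular for every $\Delta\in\mathfrak{B}$. Part (a) then forces $S(f)\subset\{0\}$, which is bounded. Suppose now $f(\Sing f)$ were unbounded; since polynomials are bounded on compact sets, there exist $z_{k}\in\Sing f$ with $\|z_{k}\|\to\infty$ and $|f(z_{k})|\to\infty$. Applying the Curve Selection Lemma to the semi-algebraic set $\Sing f$ and running the Puiseux analysis of the preceding paragraph (with the singularity conditions now holding identically rather than asymptotically) produces a face $\Delta$ and a point $a\in\Sing f_{\Delta}\cap\bC^{*J}$, contradicting strong non-degeneracy; hence $f(\Sing f)$ is bounded.
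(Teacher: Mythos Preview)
Your outline follows the right template, but there are genuine gaps at the two places where the proof actually bites.

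\textbf{Part (a).} First, from $f(z(s))\to t_0\neq 0$ you only get $d\le 0$, not $d=0$: if $d<0$ then the leading coefficient $f_\Delta(a)$ vanishes, and this case must be treated separately. In the paper this is case (I): by Lemma~\ref{lem:Let--be}(a) the face $\Delta$ lies in $\Gamma^+(f^I)$, and together with $f_\Delta(a)=0$ and $a\in\Sing f_\Delta$ one contradicts Newton non-degeneracy. This is in fact the \emph{only} place where non-degeneracy is invoked in (a); your write-up never uses the hypothesis. Second, your claim that ``Puiseux substitution applied to the gradient conditions yields at dominant order the vanishing of both $\partial f_\Delta/\partial z_i$ and $\partial f_\Delta/\partial\bar z_i$'' is both too strong and unjustified. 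The Milnor-set relation is $\lambda(s)z(s)=\mu(s)\overline{\d f}+\bar\mu(s)\bar\d f$, and at leading order in each coordinate one obtains only the dichotomy
\[
\mu_0\overline{\tfrac{\partial f_\Delta}{\partial z_i}}(a)+\bar\mu_0\tfrac{\partial f_\Delta}{\partial\bar z_i}(a)=
\begin{cases}\lambda_0 a_i & \text{if } d-p_i+l=p_i+\gamma,\\ 0 & \text{if } d-p_i+l<p_i+\gamma.\end{cases}
\]
The heart of the argument is to rule out the first alternative for every $i$ (i.e.\ to show $J=\emptyset$); this requires differentiating $f(z(s))$ along the curve, pairing with the Milnor relation, and comparing orders (equations \eqref{eq:-6}--\eqref{eq:innerprod} in the paper). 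Only then does one obtain $\mu_0\overline{\d f_\Delta}(a)+\bar\mu_0\bar\d f_\Delta(a)=0$, which is the mixed singularity condition of Lemma~\ref{pro:-is-a}---not the separate vanishing of holomorphic and antiholomorphic partials.

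\textbf{Part (b).} Your argument that strong non-degeneracy forces $S(f)\subset\{0\}$ is incorrect, because bad faces are \emph{not} in general faces of $\Gamma^+(f)$: a strictly bad face (Definition~\ref{d:badface}(ii)) spans an affine subspace through the origin and lies outside $\Gamma^+(f)$, so strong non-degeneracy says nothing about $\Sing f_\Delta\cap\bC^{*n}$ for such $\Delta$. Example~\ref{ex:1} exhibits a strongly non-degenerate $f$ with $S(f)$ a genuine curve. The correct route is the one the paper takes: run the curve-selection argument on an unbounded path in $\Sing f$ (where $\lambda\equiv 0$, so \eqref{eq:-7} is immediate) to show $f(\Sing f)$ is bounded, then apply the same reasoning to each $f_\Delta$ for $\Delta\in\mathfrak B$ via Remark~\ref{r:Delta_strongly}, and finally invoke (a).
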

We present in \S \ref{ss:examples} several examples, some of them illustrating the differences to the holomorphic setting. In particular,  Example \ref{e:not_strongly} shows a phenomenon which could not occur before (i.e. for holomorphic polynomials):  a bad face which is in the same time a non-degenerate face of the Newton boundary, but which contributes to the bifurcation locus.  

It was proved in \cite[Proposition 6]{NZ1} that the inclusion Theorem \ref{t:atyp_Newton}(a) becomes an equality whenever $n=2$.  Example \ref{e:counterex} shows that this is no more the case in the mixed setting. In the same example we compute explicitly the bifurcation set $B(f)$ (which appears to be a real one-dimensional closed curve) and in particular the change of the topology of the fibres when crossing it. This change of topology and in particular the explicit description of $B(f)$ are given in Example \ref{ex:1} too.

 The proof of Theorem \ref{t:atyp_Newton} in \S \ref{s:proof} provides in addition some more precise information on how to detect the values $c\not=0$ such that the fibre $f^{-1}(c)$ contains unbounded branches of $M(f)$, see Remark \ref{r:cnotzero}. We also draw some consequences of Theorem \ref{t:atyp_Newton} for the convenient and the weighted-homogeneous mixed functions.

One may remark that Theorem \ref{t:atyp_Newton}(b) implies the existence of a \textit{monodromy fibration at infinity} which is quite unexpected in the real setting. This includes the special property that the image of $f$ contains the complement of some disk in $\bR^2$.
 We then prove the stability of the monodromy at infinity within a family of mixed polynomials with constant Newton boundary at infinity. This represents not only an extension of the corresponding result in the holomorphic case \cite[Theorem 17]{NZ2}, \cite[Theorem 1.1]{Ph}, but yields the following result, Corollary \ref{c:newton_constant}:
\textit{if $f$ and $g$ are two Newton strongly non-degenerate mixed polynomials with the same Newton boundary at infinity $\Gamma^+$ and such that the restrictions $f_{\Gamma^+}$ and $g_{\Gamma^+}$ are holomorphic, then their monodromies at infinity are isotopic.} This holds in spite of the fact that in the mixed setting, unlike in the complex setting,  the set of Newton (strongly) non-degenerate polynomials is neither connected nor dense, see \S \ref{ss:open}.


\section{Atypical values of mixed polynomials}\label{atyp}

In the setting of holomorphic polynomials, it is well-known that in certain cases $f$ has no atypical values at infinity, for instance:  ``convenient polynomials with non-degenerate
Newton principal part at infinity'' (Kushnirenko \cite{Ku}), see \S \ref{ss:newton}, polynomials which are ``tame'' (Broughton \cite{Br1}, \cite{Br2}), ``M-tame'' (N\'emethi 
\cite{Ne1}, \cite{Ne2}), ``cohomologically tame'' (Sabbah, N\'emethi \cite{NS}, \cite{Sa}). 
 For $n=2$ one has several characterisations of the atypical values at infinity, see e.g. \cite{HL}, \cite{Du}, \cite{Ti-reg}. In higher dimensions the problem is still open and one looks for some significant set $A \supset B(f)$  which bounds $B(f)$ reasonably well. For instance, in
 case of non-convenient but still Newton non-degenerate polynomials, N\'emethi and Zaharia \cite{NZ1} found an interesting approximation $A \supset B(f)$ in terms of certain faces of the support of $f$, see below. This provides a large class of polynomials for which we control rather well the bifurcation locus.

Let us now leave the holomorphic setting. 
We first set some notations and definitions, then show that the fibres of a mixed polynomial $f$ which are asymptotically tangent to the spheres may cause atypical behaviour at infinity and that the \textit{$\rho$-regularity} is weaker than some other regularity conditions at infinity. 


If $f:= (g,h) : \bR^{2n} \to \bR^2$, where $g(x_1, \ldots , y_n)$
 and $h(x_1, \ldots , y_n)$ are real polynomial functions, then, by writing $\mathbf{z}=\mathbf{x}+i\mathbf{y}\in \bC^n$, where $z_{k}=x_{k}+iy_{k}$ for $k=1,\ldots,n$, 
one gets a polynomial function $f: \bC^n \to \bC$ in variables $\bz$ and $\bar \bz$, namely 
$f(\mathbf{z},\mathbf{\bar{z}}):=g(\frac{\mathbf{z}+\mathbf{\bar{z}}}{2},\frac{\mathbf{z}-\mathbf{\bar{z}}}{2i})+ih(\frac{\mathbf{z}+\mathbf{\bar{z}}}{2},\frac{\mathbf{z}-\mathbf{\bar{z}}}{2i})$, and reciprocally.
 The notation $f(\mathbf{z},\mathbf{\bar{z}})$ instead of simply $f(\mathbf{z})$ is useful since we shall often use derivation with respect to $\mathbf{z}$ and $\mathbf{\bar{z}}$, such as in the following notations: $\d f : =\left(\frac{\partial f}{\partial z_{1}},\cdots,\frac{\partial f}{\partial z_{n}}\right)$,
$\overline{\d}f :=\left(\frac{\partial f}{\partial\overline{z}_{1}},\cdots\frac{\partial f}{\partial\overline{z}_{n}}\right)$, and 
 $\overline{\d f} := \left(\frac{\partial \bar f}{\partial\overline{z}_{1}},\cdots\frac{\partial \bar f}{\partial\overline{z}_{n}}\right)$ is the conjugate of $\d f$.
We shall therefore write $f$ as a \textit{mixed polynomial}\footnote{The concept appears in the work by A'Campo \cite{AC} and by Ruas, Seade and Verjovsky \cite{RSV} who actually studied a subclass of such mixed polynomials. One may lookup  \cite{Oka1, Oka2, Oka3} for more references.}, after Oka \cite{Oka2}:
\[
f(\mathbf{z}) = f(\mathbf{z},\mathbf{\bar{z}})=\underset{{\scriptstyle \nu,\mu}}{\sum}c_{\nu,\mu}\mathbf{z}^{\nu}\mathbf{\bar{z}^{\mu}}
\]
 where $c_{\nu,\mu}\in \bC$,  $\mathbf{z}^{\nu} :=z_{1}^{\nu_{1}}\cdots z_{n}^{\nu_{n}}$ for
$\nu=(\nu_{1},\cdots,\nu_{n})\in\mathbb{N}^{n}$ and  
 $\mathbf{\bar{z}^{\mu}} :=\bar{z}_{1}^{\mu_{1}}\cdots\bar{z}_{n}^{\mu_{n}}$
for $\mu=(\mu_{1},\cdots\mu_{n})\in\mathbb{N}^{n}$. 
 
 \begin{lemma}\label{l:M(f)geometric}
Let $f: \bC^n \to \bC$ be a mixed polynomial. The intersection of the fibre  $f^{-1}(f(\mathbf{z},\overline{\mathbf{z}}))$ with the sphere $S_r^{2n-1}$ of radius $r=\|\bz\|$ is not transversal at $\bz\in \bC^n\m \{ 0\}$
if and only if there exist $\mu\in\bC^*$, $\lambda\in\mathbb{R}$ such that: 
\[
\lambda\mathbf{z}=\mu\overline{\mathrm{d}f}(\mathbf{z},\overline{\mathbf{z}})+\overline{\mu}\overline{\mathrm{d}}f(\mathbf{z},\overline{\mathbf{z}}).\]
 \end{lemma}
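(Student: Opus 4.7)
The plan is to translate the real-geometric condition of non-transversality into the Wirtinger (complex) formalism, after which the claim becomes a direct computation. First I decompose $f = g + ih$ with $g, h : \bR^{2n} \to \bR$ real polynomial, and set $\rho(\bz, \bar\bz) = \|\bz\|^2$, so that the sphere $S_r^{2n-1}$ is the regular level set $\rho = r^2$ at every $\bz \neq 0$. The fibre $f^{-1}(f(\bz))$ is the joint zero set of $g - g(\bz)$ and $h - h(\bz)$. The non-transversality of the fibre with the sphere at $\bz$ — both in the sense of tangency of two smooth submanifolds and in the sense of $\bz$ being a singular point of the fibre — is by definition the $\bR$-linear dependence of the three real gradients $\nabla g(\bz)$, $\nabla h(\bz)$, $\nabla \rho(\bz)$ in $\bR^{2n}$. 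This is the first step and is essentially a definition check.

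Next, identify $\bR^{2n} \cong \bC^n$ via $(x_k, y_k) \leftrightarrow x_k + iy_k$. Under this identification, using $2\bar\partial_k = \partial_{x_k} + i\partial_{y_k}$, one has $\nabla \varphi \leftrightarrow 2\bar\partial\varphi$ for any smooth real-valued $\varphi$. Applied to $g = \tfrac{1}{2}(f + \bar f)$, $h = \tfrac{1}{2i}(f - \bar f)$ and $\rho$, this yields
$$\nabla g \longleftrightarrow \bar\partial f + \overline{\d f}, \qquad \nabla h \longleftrightarrow -i\bigl(\bar\partial f - \overline{\d f}\bigr), \qquad \nabla \rho \longleftrightarrow 2\bz.$$
Substituting these into a dependence relation $\alpha\nabla g + \beta\nabla h + \gamma\nabla\rho = 0$ with $(\alpha,\beta,\gamma) \in \bR^3 \setminus \{0\}$ and collecting terms gives
$$(\alpha - i\beta)\bar\partial f + (\alpha + i\beta)\overline{\d f} + 2\gamma\bz = 0.$$
Setting $\mu := \alpha + i\beta$ and $\lambda := -2\gamma$ produces exactly $\lambda\bz = \mu\overline{\d f} + \bar\mu\bar\partial f$. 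The reverse direction is the same calculation read backwards: given $(\lambda, \mu)$, one recovers $(\alpha, \beta, \gamma) = (\re\mu, \im\mu, -\lambda/2)$.

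The only subtlety is to verify that $\mu \in \bC^*$, and not merely $(\mu,\lambda) \neq (0,0)$. If $\mu = 0$, then $\alpha = \beta = 0$ and the original relation collapses to $\gamma\nabla\rho = 0$; since $\bz \neq 0$ forces $\nabla\rho \neq 0$, this implies $\gamma = 0$, contradicting the non-triviality of the triple. So the hypothesis $\bz \neq 0$ is precisely what upgrades $\mu$ to be nonzero. I do not anticipate any genuine obstacle: the proof is essentially a bookkeeping exercise in Wirtinger calculus, and the only care needed is the correct tracking of the factors of $i$ and the handling of the $\bz \neq 0$ hypothesis.
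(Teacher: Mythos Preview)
Your proof is correct and follows essentially the same approach as the paper: both translate the real linear-dependence condition $\gamma\mathbf{v} = \alpha\,\mathrm{d}\,\mathrm{Re}f + \beta\,\mathrm{d}\,\mathrm{Im}f$ into Wirtinger coordinates and read off $\mu = \alpha + i\beta$, $\lambda$ proportional to $\gamma$. Your treatment is in fact slightly more complete, since you explicitly verify that $\mu \neq 0$ using the hypothesis $\mathbf{z} \neq 0$, a point the paper's proof passes over in silence.
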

 
\begin{proof}
Let $
f: \bC^n = \mathbb{R}^{2n}\longrightarrow\mathbb{R}^{2},\, f(\mathbf{z},\overline{\mathbf{z}}) =(\mathrm{Re}f(\mathbf{z},\overline{\mathbf{z}}),\mathrm{Im}f(\mathbf{z},\overline{\mathbf{z}}))$, 
  and let us denote $\mathbf{v}:=(x_{1},y_{1},\ldots,x_{n},y_{n})$.

If $f^{-1}(f(\mathbf{z},\overline{\mathbf{z}}))$ does not intersect
transversely the sphere $S_{r}^{2n-1}$ at \textbf{$\mathbf{z}$},
then there exist $\alpha,\beta,\gamma\in\mathbb{R},\left|\alpha\right|+\left|\beta\right|+\left|\gamma\right|\neq0$
such that:
\begin{equation*}
\gamma\mathbf{v}=\alpha\mathrm{dRe}f(\mathbf{v})+\beta\mathrm{dIm}f(\mathbf{v}).
\end{equation*}

By displaying this equality we easily get  $\gamma z_{k}=(\alpha+\beta i)\frac{\partial\bar{f}}{\partial\overline{z}_{k}}+(\alpha-\beta i)\frac{\partial f}{\partial\overline{z}_{k}}$
for every $k\in\{1,\ldots,n\}$.  Our claim follows by taking $\lambda=\gamma$ and $\mu=\alpha+\beta i$.
\end{proof}

The \emph{singular locus} $\Sing f$ of a mixed polynomial $f$ is by definition the set of critical
points of $f$ as a real-valued map. From Lemma \ref{l:M(f)geometric}, by taking 
$\lambda =0$ and dividing by $\mu$, we get the following characterisation:
\begin{lemma}\label{pro:-is-a}\cite[Proposition 1]{Oka1} \ 
One has $\mathbf{z}\in\Sing f$ if and only if there exists $\mu\in\mathbb{C}$,
$\left|\mu\right|=1$, such that $\overline{\mathrm{d}f}(\mathbf{z},\overline{\mathbf{z}})=\mu\overline{\mathrm{d}}f(\mathbf{z},\overline{\mathbf{z}})$. \fin
\end{lemma}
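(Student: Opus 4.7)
The plan is to derive the characterisation directly from the proof of Lemma~\ref{l:M(f)geometric}, specialising the computation to the case where the coefficient of the radial vector is zero. By definition $\mathbf{z}\in\Sing f$ means that the real $2\times 2n$ Jacobian of $f=(g,h):\bR^{2n}\to\bR^{2}$ has rank strictly less than $2$ at $\mathbf{z}$, equivalently there exist real numbers $\alpha,\beta$ not both zero with $\alpha\,\mathrm{d}g+\beta\,\mathrm{d}h=0$. This is exactly the condition appearing in the first step of the proof of Lemma~\ref{l:M(f)geometric}, but with the coefficient $\gamma$ of the radial vector $\mathbf{v}$ forced to be zero, so that the nonzero combination is carried entirely by $(\alpha,\beta)$.

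For the forward implication I would simply replay the Wirtinger conversion already carried out in the preceding proof: writing $\alpha\,\mathrm{d}g+\beta\,\mathrm{d}h=0$ coordinate-wise gives
\[
0 \;=\; (\alpha+\beta i)\frac{\partial\bar f}{\partial\bar z_{k}} \;+\; (\alpha-\beta i)\frac{\partial f}{\partial\bar z_{k}}, \qquad k=1,\ldots,n.
\]
Setting $\nu:=\alpha+\beta i\in\bC^{*}$ (nonzero since $(\alpha,\beta)\neq(0,0)$) and dividing by $\nu$ yields $\overline{\mathrm{d}f}(\mathbf{z},\overline{\mathbf{z}}) = -(\bar\nu/\nu)\,\overline{\mathrm{d}}f(\mathbf{z},\overline{\mathbf{z}})$, and the coefficient $\mu:=-\bar\nu/\nu$ has modulus $1$, which gives the claimed relation.

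For the converse, given $\mu\in\bC$ with $|\mu|=1$ and $\overline{\mathrm{d}f}=\mu\,\overline{\mathrm{d}}f$, I would recover nonzero real numbers $(\alpha,\beta)$ by solving $-\bar\nu/\nu=\mu$ for $\nu=\alpha+i\beta$: pick any unit-modulus square root $\nu$ of $-\bar\mu$, and a direct computation using $|\nu|=1$ confirms $-\bar\nu/\nu=\mu$. Reversing the Wirtinger identities then delivers $\alpha\,\mathrm{d}g+\beta\,\mathrm{d}h=0$ with $(\alpha,\beta)\neq(0,0)$, so $\mathbf{z}\in\Sing f$. There is no conceptual obstacle: the whole content is bookkeeping between the real linear-dependence condition on $(\mathrm{d}g,\mathrm{d}h)$ and its complex reformulation in terms of $\partial/\partial z_{k}$ and $\partial/\partial\bar z_{k}$, together with the routine surjectivity of $\nu\mapsto-\bar\nu/\nu$ on the unit circle.
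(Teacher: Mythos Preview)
Your proof is correct and follows essentially the same approach as the paper: the paper derives the characterisation in one sentence by taking $\lambda=0$ in Lemma~\ref{l:M(f)geometric} and dividing by $\mu$, and you spell out precisely this computation together with the (easy) converse direction that the paper leaves implicit.
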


\subsection{$\rho$-regularity}

\begin{definition}\label{d:milnorset}
The \emph{Milnor set} of a mixed polynomial $f$ is 
\[  M(f)=\left\{ \mathbf{z}\in\mathbb{C}^{n}\mid\exists\lambda\in\mathbb{R} \mbox{ and } \mu\in\mathbb{C}^{*}, \mbox{ such that }\lambda\mathbf{z}=\mu\overline{\mathrm{d}f}(\mathbf{z},\overline{\mathbf{z}})+\overline{\mu}\overline{\mathrm{d}}f(\mathbf{z},\overline{\mathbf{z}})\right\} .\]
\end{definition}
In case of holomorphic $f$, $M(f)$ was called ``Milnor set'' in \cite{NZ1}. By its definition and by its geometric interpretation, $M(f)$ is a closed semi-algebraic subset of $\mathbb{C}^{n}$ and this fact will be used in the following. Lemma \ref{l:M(f)geometric} gives the geometric interpretation of $M(f)$ as the critical locus of the map $(f, \rho)$, where $\rho : \bR^{2n} \to \bR_{\ge 0}$ is the Euclidean distance function.
Like in the holomorphic setting \cite{NZ1}, one may define:

\begin{definition}\label{d:s}
The set of \emph{asymptotic} \emph{$\rho$-nonregular values} of
a mixed polynomial $f$ is \begin{align*}
S(f) & =\{c\in\mathbb{C\mid}\exists\,\{\mathbf{z}_{k}\}_{k\in\mathbb{N}}\subset M(f),\,\underset{k\rightarrow\infty}{\lim}\Vert\mathbf{z}_{k}\Vert=\infty\,\mathrm{and}\,\underset{k\rightarrow\infty}{\lim}\mathit{f(\mathbf{z}_{k},\overline{\mathbf{z}_{k}})=c}\}.
\end{align*}
\end{definition}

A value $c\not\in S(f)$ will be called an \emph{asymptotic $\rho$-regular value}\footnote{The name ``$\rho$-regularity'' was used in the setting of real and complex polynomial functions in \cite{Ti-reg, Ti}.}. The condition $S(f) = \emptyset$ for a holomorphic $f$ was called ``M-tameness'' in \cite{NZ1}, \cite{NZ2}, \cite{NS}.  

In order to investigate the properties of $S(f)$ we need a version of the \emph{curve selection lemma} at infinity. Milnor \cite{Mi} has proved this lemma 
at points of the closure of a semi-analytic set. N\'emethi and Zaharia  \cite{NZ1}, \cite{NZ2}, showed how to extend the result at infinity at some fibre of a holomorphic polynomial function. We give here a more general statement including the case when the value of $|f|$ tends to infinity. Since the proof is similar to the one in \cite{NZ2} and uses Milnor's result, we may safely leave it to the reader.
\begin{lemma}\label{l:curve-selection} \textbf{Curve Selection Lemma at infinity}\\
Let $U\subseteq\mathbb{R}^{n}$
be a semi-analytic set. Let $g:\mathbb{R}^{n}\longrightarrow\mathbb{R}$ be a polynomial function.
If there is $\left\{ \mathbf{y}_{k}\right\} _{k\in\mathbb{N}}\subset U$
such that $\underset{k\rightarrow\infty}{\lim}\Vert\mathbf{y}_{k}\Vert=\infty$
and $\underset{k\rightarrow\infty}{\lim}g(\mathbf{y}_{k})=c$, where
$c\in\mathbb{R}$, $c=\infty$ or $c=-\infty$, then there exist a
real analytic path $\mathbf{x}(t)=\mathbf{x_{0}}t^{\alpha}+\mathbf{x_{1}}t^{\alpha+1}+\mathrm{h.o.t.}$ defined on some small enough interval
$\left]0,\varepsilon\right[$ with $\mathbf{x}(t)\in U$,
such that $\mathbf{x_{0}}\neq 0$, $\alpha<0$, $\alpha\in\mathbb{Z}$, and
 that $\lim_{t \to 0} g(\mathbf{x}(t))=c$. 
 \fin
 \end{lemma}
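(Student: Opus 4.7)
The plan is to reduce the statement to Milnor's classical Curve Selection Lemma at a point by inverting about the origin and encoding the target value $c$ as an extra coordinate. First I introduce the real-analytic involution $\phi:\bR^n\setminus\{0\}\to\bR^n\setminus\{0\}$, $\phi(\bv)=\bv/\|\bv\|^2$, which swaps the behaviour at infinity with the behaviour at $0$ since $\|\phi(\bv)\|=1/\|\bv\|$. Setting $\bw_k:=\phi(\mathbf{y}_k)$ yields a sequence in $\widetilde U:=\phi(U\setminus\{0\})$ with $\bw_k\to 0$. Since $\phi$ is a real-analytic diffeomorphism of $\bR^n\setminus\{0\}$ and in every application in the paper $U$ is semi-algebraic, after clearing denominators one checks that $\widetilde U$ is semi-analytic in a neighbourhood of $0$; for a purely semi-analytic $U$ the set $\widetilde U$ is sub-analytic at $0$ and one appeals to the Bierstone--Milman version of Milnor's lemma.

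Second, I encode the limit value $c$ as an extra coordinate. Writing $d:=\deg g$, the pullback $\tilde g(\bw):=g(\phi(\bw))$ is rational, while $G(\bw):=\|\bw\|^{2d}\tilde g(\bw)$ is a polynomial of degree at most $2d$. When $c\in\bR$ I consider the semi-analytic set
\[
\widetilde V := \bigl\{(\bw,\tau)\in(\bR^n\setminus\{0\})\times\bR \mid \bw\in\widetilde U,\ G(\bw)=\tau\|\bw\|^{2d}\bigr\},
\]
so that $(\bw_k,g(\mathbf{y}_k))\in\widetilde V$ converges to $(0,c)$, and I apply Milnor's Curve Selection Lemma \cite{Mi} at $(0,c)$. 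When $c=\pm\infty$ I instead use the reciprocal coordinate $u=1/\tau$ together with the analogous set defined by $uG(\bw)=\pm\|\bw\|^{2d}$ with the appropriate sign imposed on $u$, and I apply the lemma at $(0,0)$.

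In both cases I obtain a real-analytic path $s\mapsto(\bw(s),\tau(s))$ (respectively $(\bw(s),u(s))$) lying in the corresponding set, with $\bw(s)\to 0$ and $\tau(s)\to c$ as $s\to 0^{+}$. Being real-analytic in the single parameter $s$ and vanishing at $s=0$, the curve $\bw(s)$ admits a convergent expansion $\bw(s)=\bw_\beta s^\beta+\bw_{\beta+1}s^{\beta+1}+\mathrm{h.o.t.}$ with $\bw_\beta\neq 0$ and $\beta\in\bZ_{\ge 1}$; then $\|\bw(s)\|^2=\|\bw_\beta\|^2 s^{2\beta}(1+O(s))$, so that
\[
\bx(s) := \phi(\bw(s)) = \frac{\bw(s)}{\|\bw(s)\|^2} = s^{-\beta}\bigl(\bx_0+\bx_1 s+\mathrm{h.o.t.}\bigr),
\]
with $\bx_0=\bw_\beta/\|\bw_\beta\|^2\neq 0$. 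Setting $\alpha:=-\beta\in\bZ_{<0}$ and renaming $s$ as $t$ gives the claimed form of the path; by construction $\bx(t)\in U$, $\|\bx(t)\|=1/\|\bw(t)\|\to\infty$, and $g(\bx(t))=\tilde g(\bw(t))=\tau(t)\to c$.

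The main obstacle is the semi-analyticity of $\widetilde U$ at $0$: since $\phi$ is real-analytic only on the complement of the origin, for a genuine semi-analytic $U$ one cannot directly apply Milnor's original result and must invoke its sub-analytic extension. For the semi-algebraic sets occurring in all applications of the paper, however, the clearing-denominators trick keeps both $\widetilde U$ and $\widetilde V$ semi-algebraic, so Milnor's original lemma applies verbatim and the argument goes through exactly as in the holomorphic setting of \cite{NZ2}.
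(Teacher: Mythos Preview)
Your argument is correct and follows the same route the paper defers to \cite{NZ2} for: bring infinity to a finite point (you use the Kelvin inversion $\bv\mapsto\bv/\|\bv\|^2$, which plays the same role as a spherical or projective compactification), append the target value as an extra coordinate, and invoke Milnor's Curve Selection Lemma at that point. Your explicit treatment of the case $c=\pm\infty$ via the reciprocal coordinate, and your honest flagging of the semi-analytic versus semi-algebraic issue (only the latter actually occurs in the paper's applications, so Milnor's original lemma suffices there), fill in precisely the details the paper leaves to the reader.
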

We have the following structure result:
\begin{proposition}\label{p:closed}
\label{pro:-is-a-1} If  $f:\mathbb{C}^{n}\rightarrow\mathbb{C}$
is a mixed polynomial, then 
$S(f)$ and $f(\Sing f)\cup S(f)$ are closed semi-algebraic sets. 
\end{proposition}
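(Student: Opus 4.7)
My plan is to prove the two assertions separately: semi-algebraicity by Tarski--Seidenberg quantifier elimination, and closedness by a diagonal extraction combined with the (elementary) inclusion $\Sing f\subset M(f)$.

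For semi-algebraicity, $M(f)$ is closed semi-algebraic as noted right after Definition \ref{d:milnorset}, and $\Sing f$ is closed semi-algebraic since by Lemma \ref{pro:-is-a} it is the projection to $\bC^n$ of the real-algebraic subset of $\bC^n\times S^1$ defined by $\overline{\d f}(\bz,\bar\bz)-\mu\overline{\d}f(\bz,\bar\bz)=0$. Hence $f(\Sing f)$ is semi-algebraic, being the image of a semi-algebraic set under the polynomial map $f$. For $S(f)$, one observes that the defining property is equivalent to the first-order formula
\[
\forall R>0,\ \forall\varepsilon>0,\ \exists\bz\in\bC^n:\ \bz\in M(f)\ \wedge\ \|\bz\|^{2}>R^{2}\ \wedge\ |f(\bz,\bar\bz)-c|^{2}<\varepsilon^{2},
\]
so Tarski--Seidenberg yields that $S(f)$ is semi-algebraic as well.

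For closedness of $S(f)$, I would run a diagonal argument: given $c_n\to c$ with $c_n\in S(f)$, for each $n$ pick $\bz_n\in M(f)$ with $\|\bz_n\|>n$ and $|f(\bz_n,\overline{\bz_n})-c_n|<1/n$, extracted from the sequence witnessing $c_n\in S(f)$. Then $\|\bz_n\|\to\infty$ and $f(\bz_n,\overline{\bz_n})\to c$, so $c\in S(f)$.

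For closedness of $f(\Sing f)\cup S(f)$, the key preliminary step is the inclusion $\Sing f\subset M(f)$. Indeed, if $\bz\in\Sing f$ satisfies $\overline{\d f}(\bz,\bar\bz)=\mu_0\overline{\d}f(\bz,\bar\bz)$ with $|\mu_0|=1$, then the Milnor condition with $\lambda=0$ reduces to finding $\mu\in\bC^*$ with $\mu\mu_0+\bar\mu=0$, explicitly solved by $\mu=\exp\bigl(i(\pi-\arg\mu_0)/2\bigr)$. Granting this, let $c_k\in f(\Sing f)\cup S(f)$ with $c_k\to c$. After passing to a subsequence I may assume either $c_k\in S(f)$ for all $k$---already handled---or $c_k\in f(\Sing f)$ for all $k$. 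In the latter case, pick $\bw_k\in\Sing f$ with $f(\bw_k,\overline{\bw_k})=c_k$: if $\{\bw_k\}$ has a bounded subsequence, its cluster point lies in the closed set $\Sing f$ and maps to $c$, giving $c\in f(\Sing f)$; if $\|\bw_k\|\to\infty$, then $\bw_k\in\Sing f\subset M(f)$ shows $c\in S(f)$. In all cases $c\in f(\Sing f)\cup S(f)$.

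The only non-routine input is the verification $\Sing f\subset M(f)$; once in hand, the rest reduces to quantifier elimination and a diagonal extraction. Notably, the Curve Selection Lemma at infinity is not needed for this statement, even though it will be indispensable further on.
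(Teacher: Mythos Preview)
Your proof is correct and follows essentially the same route as the paper's: the diagonal extraction for closedness of $S(f)$, and the dichotomy bounded/unbounded for $\overline{f(\Sing f)}$ using $\Sing f\subset M(f)$, are exactly what the paper does. The only noticeable difference is in the semi-algebraicity of $S(f)$: the paper realizes $S(f)$ as the projection of a semi-algebraic set obtained by compactifying $\bC^n$ via the map $\bz\mapsto\bigl(\tfrac{\bz}{\sqrt{1+\|\bz\|^2}},\tfrac{1}{\sqrt{1+\|\bz\|^2}},f(\bz,\bar\bz)\bigr)$ and intersecting the closure of the image of $M(f)$ with the hyperplane at infinity, whereas you invoke Tarski--Seidenberg directly on the first-order description; your version is the more elementary of the two and avoids the auxiliary embedding, while the paper's has the advantage of exhibiting $S(f)$ concretely as a coordinate projection of an explicit semi-algebraic set.
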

\begin{proof}
 $S(f)$ may be presented as the projection of a semi-algebraic set. Indeed, consider the embedding of $\mathbb{\mathbb{C}}^{n}$
into $\mathbb{\mathbb{C}}^{n+1}\times\mathbb{\mathbb{C}}$ given by
the  semi-algebraic map:
\[
\varphi:\,(z_{1},\ldots,z_{n})\mapsto(\frac{z_{1}}{\sqrt{1+\| \mathbf{z}\|  ^{2}}},\ldots,\frac{z_{n}}{\sqrt{1+\| \mathbf{z}\|  ^{2}}},\frac{1}{\sqrt{1+\| \mathbf{z}\|  ^{2}}},f(\mathbf{z},\overline{\mathbf{z}})).
\]
Then $U_{1}:=\overline{\varphi(M(f))}\cap\{(x_{1},\ldots,x_{n+1},c)\in\mathbb{C}^{n+1}
\times\mathbb{C}\mid x_{n+1}=0\}$
is a semi-algebraic set and $S(f)=\pi(U_{1})$, where $\pi:\mathbb{C}^{n+1}\times\mathbb{C}\rightarrow\mathbb{C}$
 is the projection. Therefore $S(f)$ is semi-algebraic, by the Tarski-Seidenberg theorem. 

Let now $c\in \overline{S(f)}$. There exists a sequence $\{c_{i}\}_{i}\subset S(f)$ such that $\underset{i\rightarrow\infty}{\lim}c_{i}=c$.
For any $i$, we have by definition a sequence $\{\mathbf{z}_{i,n}\}_{n}\subset M(f)$
such that $\lim_{n\to \infty} \|\mathbf{z}_{i,n}\| =\infty$ and $\lim_{n\to \infty} f(\mathbf{z}_{i,n},\overline{\mathbf{z}}_{i,n})=c_{i}$.
Take a sequence $\{r_{i}\}_{i}\subset\mathbb{R}_{+}$  such that
$\lim_{i\to \infty}r_{i}=\infty$. For each $i$ there exists
$n(i)\in\mathbb{N}$ such that $\mathbf{z}_{i,n} >r_{i}$ implies $
\left| f(\mathbf{z}_{i,n},\overline{\mathbf{z}}_{i,n})-c_{i}\right| <\frac{1}{r_{i}}$,
$\forall n\geqslant n(i)$.
Setting $\mathbf{z}_{k} :=\mathbf{z}_{k, n(k)}$ we get a sequence $\{\mathbf{z}_{k}\}_{k}$ such that $\underset{k\rightarrow\infty}{\lim}\Vert\mathbf{z}_{k}\Vert=\infty$
and $\underset{k\rightarrow\infty}{\lim}f(\mathbf{z}_{k},\overline{\mathbf{z}}_{k})=c$, which shows that $c\in S(f)$.

Let now $a\in\overline{f(\Sing f)}\cup \overline{S(f)}$.
Since we have proved that $S(f)$ is closed, we may assume that $a\in\overline{f(\Sing f)}$.
Then there exists a sequence $\{\mathbf{z}_{n}\}_{j\in \bN}\subset\Sing f$,
such that $\lim_{j\rightarrow\infty}f(\mathbf{z}_{j},\overline{\mathbf{z}}_{j})=a$.
If $\{\mathbf{z}_{j}\}_{j\in \bN}$ is not bounded, then we may choose a subsequence
$\{\mathbf{z}_{j_k}\}_{k\in \bN}$ such that $\lim_{k\to\infty}\mathbf{z}_{j_k}=\infty$
and $\lim_{k\rightarrow\infty}f(\mathbf{z}_{j_k},\overline{\mathbf{z}}_{j_k})=a$.
Since $\Sing f\subset M(f)$, it follows that $a\in S(f)$, see also Remark \ref{r:cnotzero}. In the
other case, if $\{\mathbf{z}_{j}\}_{j\in \bN}$ is bounded, then we may choose a
subsequence $\{\mathbf{z}_{j_k}\}_{k\in \bN}$ such that $\lim_{k\to\infty}\mathbf{z}_{j_k}=\mathbf{z}_{0}$
and $\lim_{k\rightarrow\infty}f(\mathbf{z}_{j_k},\overline{\mathbf{z}}_{j_k})=a$.
Since $\Sing f$ is a closed algebraic set, this implies $\mathbf{z}_{0}\in\Sing f$, so $a=f(\mathbf{z}_{0},\overline{\mathbf{z}}_{0})\in f(\Sing f)$.
\end{proof}

\subsection{KOS-regularity}
For holomorphic polynomials one has the \textit{Malgrange regularity condition}, mentioned by F. Pham and used in many papers, see e.g. \cite{Pa-compo, ST}. This is known to be more general than ``tame'' (\cite{Br1, Br2}) or ``quasi-tame'' (\cite{Ne1, Ne2}).
It was extended to real maps by Kurdyka, Orro and Simon. These authors define in \cite{KOS} \textit{the set of generalized critical values}
$K(F)=F(\Sing F)\cup K_{\infty}(F)$ of a differentiable semi-algebraic map $F : \mathbb{\mathbb{R}}^{n} \to \bR^k$, where 
\begin{eqnarray*}
K_{\infty}(F)& := & \{y\in\bR^{k}\mid \exists \{ x_{l}\}_l \subset \bR^{n}, \| x_{l}\| \rightarrow\infty\\
 &  &  F(x_{l})\rightarrow y \mathrm{\ and\ } \| x_{l}\| \nu(\mathrm{d}F(x_{l}))\rightarrow0\}
\end{eqnarray*}
is the set of \emph{asymptotic critical values} of $F$. In this definition they use the following distance function:  
\begin{equation}\label{eq:dist}
\nu(A):=\underset{\| \varphi\| =1}{\inf}\| A^{*}\varphi\|  
\end{equation}
for $A\in L(\bR^n,\bR^k)$, where $A^{*}$ denotes its transpose. In the holomorphic setting one has $\nu(\mathrm{d}f(x)) = \| \grad f(x)\|$. The main result of \cite{KOS} is the following: 
\begin{theorem}\label{t:kos}\cite[Theorem 3.1]{KOS} \ \\
Let $F:\mathbb{R}^{n}\rightarrow\mathbb{R}^{k}$
be a differentiable semi-algebraic map. Then $K(F)$ is a closed semi-algebraic
set of dimension less than $k$. 

Moreover, if $F$ is of class $C^{2}$,
then $F:\mathbb{R}^{n}\setminus F^{-1}(K(F))\rightarrow\mathbb{R}^{k}\setminus K(F)$
is a locally trivial fibration over each connected component of $\mathbb{R}^{k}\setminus K(F)$.
In particular, the set $B(F)$ of bifurcation values of $F$ is included in $K(F)$.
\fin
\end{theorem}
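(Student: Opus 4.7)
The plan is to prove the three assertions of the theorem separately: (i) $K(F)$ is a closed semi-algebraic set; (ii) $\dim K(F) < k$; and (iii) $F$ restricts to a locally trivial fibration over each connected component of $\bR^{k}\setminus K(F)$.

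For (i), semi-algebraicity of $K_{\infty}(F)$ follows from Tarski--Seidenberg applied to a projection of a semi-algebraic set, constructed by the same embedding trick as in the proof of Proposition \ref{p:closed}: encode $x\in\bR^{n}$, $1/\sqrt{1+\|x\|^{2}}$, $F(x)$ and $\|x\|\,\nu(\d F(x))$ as one point in a larger space, take the closure, and intersect with the slice where the last two scalar coordinates vanish. The function $\nu(A)=\inf_{\|\varphi\|=1}\|A^{*}\varphi\|$ is semi-algebraic in $A$, since the infimum runs over a compact semi-algebraic set. Closedness then follows by a diagonal extraction identical to the one in the proof of Proposition \ref{p:closed}: given $c$ a limit of $c_{i}\in K_{\infty}(F)$ with witnessing sequences $\{x_{i,\ell}\}_\ell$, pick $r_{i}\to\infty$ and extract $x_{i}:=x_{i,\ell(i)}$ with $\|x_{i}\|>r_{i}$, $\|F(x_{i})-c_{i}\|<1/r_{i}$ and $\|x_{i}\|\,\nu(\d F(x_{i}))<1/r_{i}$.

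For (ii), the bound $\dim F(\Sing F)<k$ is the semi-algebraic version of Sard's theorem under the $C^{2}$ hypothesis. The bound $\dim K_{\infty}(F)<k$ is the technical heart of the proof and the step I expect to be the main obstacle. The strategy is by contradiction: if $K_{\infty}(F)$ contained an open set $W\subset\bR^{k}$, then for every $\varepsilon>0$ the semi-algebraic set $\{x\in\bR^{n}:F(x)\in W,\ \|x\|\,\nu(\d F(x))<\varepsilon\}$ would meet every neighbourhood of infinity. Lemma \ref{l:curve-selection} then produces an analytic arc $x(t)$ with $\|x(t)\|\to\infty$ as $t\to 0$, $F(x(t))\in W$, and $\|x(t)\|\,\nu(\d F(x(t)))\to 0$. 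Expanding $x(t)=x_{0}t^{\alpha}+\cdots$ with $\alpha<0$ integer, and combining the inequality $\|\d F(x)\cdot v\|\ge\nu(\d F(x))\,\|\pi_{(\ker\d F(x))^{\perp}}(v)\|$ with the chain-rule identity $\dot\gamma(t)=\d F(x(t))\,\dot x(t)$ where $\gamma=F\circ x$, a \L{}ojasiewicz-type comparison of the orders in $t$ of the two sides forces $\dot\gamma$ to vanish to all orders, and hence $\gamma$ to be constant; this contradicts the fact that $\gamma$ moves inside the open set $W$.

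For (iii), fix a connected component $W$ of $\bR^{k}\setminus K(F)$. Since $W$ avoids $F(\Sing F)$, the map $F$ is a $C^{2}$ submersion on $F^{-1}(W)$, so the canonical right inverse $V_{j}(x):=\d F(x)^{*}\bigl(\d F(x)\,\d F(x)^{*}\bigr)^{-1}e_{j}$ defines smooth vector fields on $F^{-1}(W)$ lifting the coordinate fields $\partial/\partial y_{j}$, with $\|V_{j}(x)\|\le 1/\nu(\d F(x))$. The condition $W\cap K_{\infty}(F)=\emptyset$ yields, for every compact $K\subset W$, constants $r,\delta>0$ with $\|x\|\,\nu(\d F(x))\ge\delta$ whenever $\|x\|\ge r$ and $F(x)\in K$, hence $\|V_{j}(x)\|\le\delta^{-1}\|x\|$ on those regions. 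Gronwall's inequality then prevents the flows from escaping to infinity in finite time, and Ehresmann's argument patches these complete flows into the desired local trivialisation of $F$ over $W$. The inclusion $B(F)\subset K(F)$ is immediate from the definition of $B(F)$.
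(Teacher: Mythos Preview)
The paper does not prove this theorem at all: the statement is quoted from \cite{KOS} and closed with the \fin\ symbol, with no argument given. So there is no ``paper's own proof'' to compare your proposal against. That said, since you have attempted an independent proof, let me comment on it.

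Your sketches for (i) and (iii) are broadly standard and along the right lines: the semi-algebraicity and closedness arguments parallel Proposition~\ref{p:closed}, and the completeness-of-lifted-vector-fields argument in (iii) is indeed the mechanism behind the fibration statement in \cite{KOS}.

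Part (ii), however, has a genuine gap. Your contradiction scheme assumes $K_{\infty}(F)$ contains an open set $W$, produces via Lemma~\ref{l:curve-selection} a single analytic arc $x(t)$ with $F(x(t))\in W$ and $\|x(t)\|\,\nu(\d F(x(t)))\to 0$, and then tries to show $\gamma=F\circ x$ is constant. Two problems. First, even if $\gamma$ were constant, this is no contradiction: a constant path sitting at one point of the open set $W$ is perfectly consistent with $W\subset K_\infty(F)$. The curve selection lemma, as invoked, gives you an arc whose $F$-image converges to \emph{one} value $c\in W$; nothing forces $\gamma$ to ``move'' inside $W$. Second, the order comparison you sketch does not force $\dot\gamma\equiv 0$. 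The quantity $\nu(\d F(x))$ is the \emph{smallest} singular value of $\d F(x)$, so the inequality you cite, $\|\d F(x)v\|\ge\nu(\d F(x))\,\|\pi_{(\ker\d F(x))^{\perp}}(v)\|$, is a \emph{lower} bound on $\|\dot\gamma\|$, not an upper bound; the hypothesis $\|x(t)\|\,\nu(\d F(x(t)))\to 0$ tells you nothing about the operator norm of $\d F(x(t))$ and hence cannot by itself suppress $\dot\gamma$.

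The dimension bound $\dim K_\infty(F)<k$ in \cite{KOS} is obtained by a substantially more delicate argument (a quantitative Sard-type estimate along trajectories, combined with a \L{}ojasiewicz-type inequality for the Rabier function $\nu$), and is not accessible by a single curve-selection contradiction of the kind you outline.
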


\subsection{The fibration theorem}\label{ss:fibr}

By the next two results we prove that $S(f)$ contains the atypical values due to the asymptotical behaviour and that $S(f)$ is contained in $K_\ity(f)$. 

\begin{proposition}\label{p:S_contained_in_K}
Let $f : \bC^n \to \bC$ be a mixed polynomial. Then $S(f) \subset K_{\infty}(f)$.
\end{proposition}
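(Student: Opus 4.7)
Given $c\in S(f)$, pick a sequence $\{\mathbf{z}_k\}\subset M(f)$ with $\|\mathbf{z}_k\|\to\infty$ and $f(\mathbf{z}_k)\to c$, and let $\mathbf{v}_k\in\mathbb{R}^{2n}$ denote its real coordinates. Writing $\mu=\psi_{1}+i\psi_{2}$ as in the proof of Lemma~\ref{l:M(f)geometric}, the Milnor condition becomes $\lambda_k\mathbf{v}_k=df(\mathbf{v}_k)^{*}\psi_k$ for some $\psi_k\in\mathbb{R}^{2}\setminus\{0\}$ and $\lambda_k\in\mathbb{R}$; after rescaling we may assume $|\psi_k|=1$. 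The main technical obstacle is that the pair $(\lambda_k,\psi_k)$ is not canonically determined by $\mathbf{v}_k$, so a direct application of the Curve Selection Lemma to $M(f)$ would not produce analytic multipliers along the resulting arc. I would bypass this by lifting to the semi-algebraic incidence set
\[
V_R\,:=\,\bigl\{(\mathbf{v},\lambda,\psi)\in\mathbb{R}^{2n}\times\mathbb{R}\times S^{1}\,\big|\,\lambda\mathbf{v}=df(\mathbf{v})^{*}\psi\bigr\}\cap\{\|\mathbf{v}\|\ge R\},
\]
which for any fixed $R>0$ contains the triples $(\mathbf{v}_k,\lambda_k,\psi_k)$ for $k$ large, and whose ambient norms in $\mathbb{R}^{2n+3}$ tend to infinity.

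Next I would apply Lemma~\ref{l:curve-selection} to $V_R$ with the polynomial $g(\mathbf{v},\lambda,\psi):=|f(\mathbf{v})-c|^{2}$, producing an analytic arc $(\mathbf{v}(t),\lambda(t),\psi(t))\in V_R$ on some $]0,\varepsilon[$ whose ambient norm diverges and along which $f(\mathbf{v}(t))\to c$. That $\mathbf{v}(t)$ itself must tend to infinity is clear: $|\psi(t)|=1$ is bounded, the equation gives $|\lambda(t)|\,\|\mathbf{v}(t)\|\le\|df(\mathbf{v}(t))\|_{\mathrm{op}}$, and the constraint $\|\mathbf{v}(t)\|\ge R$ inside $V_R$ would force $|\lambda(t)|$ bounded whenever $\mathbf{v}(t)$ were, contradicting divergence of the ambient norm. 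Hence $\mathbf{v}(t)=\mathbf{v}_{0}t^{\beta}+\mathrm{h.o.t.}$ with $\mathbf{v}_{0}\ne 0$ and $\beta\in\mathbb{Z}_{<0}$.

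Finally, for the asymptotic estimate, set $\rho(t):=\|\mathbf{v}(t)\|^{2}$ and $F(t):=f(\mathbf{v}(t))-c\in\mathbb{R}^{2}$. Pairing the identity $\lambda(t)\mathbf{v}(t)=df(\mathbf{v}(t))^{*}\psi(t)$ with $\mathbf{v}'(t)$ yields
\[
\tfrac{1}{2}\lambda(t)\,\rho'(t)\,=\,\langle df(\mathbf{v}(t))^{*}\psi(t),\mathbf{v}'(t)\rangle\,=\,\langle\psi(t),F'(t)\rangle,
\]
whence $|\lambda(t)|\,|\rho'(t)|\le 2|F'(t)|$. In the generic case $F\not\equiv 0$ one has $F(t)=bt^{\gamma}+\mathrm{h.o.t.}$ with integer $\gamma>0$, while $\rho'(t)\sim 2\beta\|\mathbf{v}_{0}\|^{2}t^{2\beta-1}$, giving $|\lambda(t)|=O(t^{\gamma-2\beta})$ and thus $|\lambda(t)|\,\rho(t)=O(t^{\gamma})\to 0$ as $t\to 0$. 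Since $\nu(df(\mathbf{v}(t)))\le\|df(\mathbf{v}(t))^{*}\psi(t)\|=|\lambda(t)|\,\|\mathbf{v}(t)\|$, we conclude
\[
\|\mathbf{v}(t)\|\,\nu(df(\mathbf{v}(t)))\,\le\,|\lambda(t)|\,\rho(t)\,\longrightarrow\,0,
\]
which, together with $\|\mathbf{v}(t)\|\to\infty$ and $f(\mathbf{v}(t))\to c$, exhibits $c\in K_{\infty}(f)$. The degenerate case $F\equiv 0$ is immediate: the same identity forces $\lambda(t)\equiv 0$, so $\mathbf{v}(t)\in\Sing f$ and $\nu(df(\mathbf{v}(t)))\equiv 0$ along the arc.
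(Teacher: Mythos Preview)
Your proof is correct and follows essentially the same route as the paper's: obtain an analytic arc via the curve selection lemma at infinity (you lift explicitly to the incidence set $V_R$ to make the multipliers analytic, a step the paper handles tersely), pair the Milnor identity with the tangent of the arc to obtain $\tfrac12\lambda(t)\rho'(t)=\langle\psi(t),F'(t)\rangle$, and conclude $|\lambda(t)|\,\rho(t)\to 0$ by comparing orders. The only cosmetic difference is that the paper carries out the same computation in complex notation, writing the key identity as $\bar\mu_0(t)\tfrac{d}{dt}f+\mu_0(t)\tfrac{d}{dt}\bar f=\lambda_0(t)\tfrac{d}{dt}\|\mathbf{z}(t)\|^2$, which is precisely twice your real inner-product relation.
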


\begin{remark}\label{r:compare_reg}
The above inclusion is \textit{strict} in general. This holds already in the holomorphic setting; to prove it, we may use the examples constructed by P\u aunescu and Zaharia in \cite{LZ}, as follows.  Let $f_{n,q}:\mathbb{C}^{3}\rightarrow\mathbb{C}$,
$f_{n,q}(x,y,z):=x-3x^{2n+1}y^{2q}+2x^{3n+1}y^{3q}+yz$, 
where $n,q\in\mathbb{N}\setminus\{0\}$.
These polynomials are $\rho$-regular at infinity and therefore we have
$S(f_{n.q})=\emptyset$. It was also shown in \cite{LZ} that $f_{n,q}$ satisfies Malgrange's
condition for any $t\in\mathbb{C}$ if and only if $n\leq q$.
Therefore, in case $n>q$, we have $\emptyset= S(f_{n.q})\subsetneq K_{\infty}(f_{n.q})\neq \emptyset$.
\end{remark}
\begin{proof}[Proof of Proposition \ref{p:S_contained_in_K}]
Let $(g,h)$ be the corresponding real map of the mixed polynomial $f$
and denote $\nu(\mathbf{x}) :=\nu(\d (g,h)(\mathbf{x}))$. Let us first show the equality:
\begin{equation}\label{lem:Let--be-2}
\nu(\mathbf{x})=\underset{\mu\in S^{1}}{\inf}\| \mu\overline{\mathrm{d}f}(\mathbf{z},\overline{\mathbf{z}})+\overline{\mu}\overline{\mathrm{d}}f(\mathbf{z},\overline{\mathbf{z}})\|  .
\end{equation}

By the definition (\ref{eq:dist}) of $\nu(\mathbf{x})$,
we have $\nu(\mathbf{x})=\underset{(a,b)\in S^{1}}{\inf}\| a\d g (\bx)+b\mathrm{d}h(\bx) \|$.
But the proof of Lemma \ref{l:M(f)geometric} shows the equality: $\|  a\mathrm{d}g(\bx) + b\mathrm{d}h(\bx) \|  = \| \mu\overline{\mathrm{d}f}(\mathbf{z},\overline{\mathbf{z}})+\overline{\mu}\overline{\d }f(\mathbf{z},\overline{\mathbf{z}})\|$
for $\mu=a+ib\in S^{1}$.  Our claim is proved.

Let then  $c\in S(f)$. By Definition \ref{d:s} and Lemma \ref{l:curve-selection}, there
exist real analytic paths, $\mathbf{z}(t)$ in  $M(f)$, $\lambda(t)$ in $\mathbb{R}$ and $\mu(t)$ in  $\mathbb{C}^{*}$,
defined on a small enough interval $\left]0,\varepsilon\right[$, 
such that $\lim_{t\to 0} \|\mathbf{z}(t)\| = \infty$ and $\lim_{t\to 0} f(\mathbf{z}(t),\overline{\mathbf{z}}(t))= c$ and 
that:
\begin{equation}\label{eq:3.1}
\lambda(t)\mathbf{z}(t)=\mu(t)\overline{\mathrm{d}f}(\mathbf{z}(t),\overline{\mathbf{z}}(t))+\overline{\mu}(t)\overline{\d }f(\mathbf{z}(t),\overline{\mathbf{z}}(t)).
\end{equation}
Let us assume that $\lambda(t)\not\equiv0$. Dividing \eqref{eq:3.1}
by $\| \mu(t)\|$ yields:
\begin{equation}\label{eq:3.3}
\lambda_{0}(t)\mathbf{z}(t)=\mu_{0}(t)\overline{\mathrm{d}f}(\mathbf{z}(t),\overline{\mathbf{z}}(t))+\overline{\mu}_{0}(t)\overline{\d }f(\mathbf{z}(t),\overline{\mathbf{z}}(t))
\end{equation}
where $\lambda_{0}(t):=\frac{\lambda(t)}{\|\mu(t)\|}$
and $\mu_{0}(t):=\frac{\mu(t)}{\|\mu(t)\|}$; therefore $\beta := \ord_{t}(\mu_{0}(t))= 0$.

Since $\underset{t\rightarrow0}{\lim}f(\mathbf{z}(t),\overline{\mathbf{z}}(t))=c$, we have
$\alpha:=\mathrm{ord_{t}}\frac{\d}{\d t} f(\mathbf{z}(t),\overline{\mathbf{z}}(t)) \ge 0$.
Then the following computation:
\begin{equation*}
\begin{array}{ll}
\overline{\mu}_{0}(t)\frac{\d}{\d t} f(\mathbf{z}(t),\overline{\mathbf{z}}(t))  +\mu_{0}(t)\frac{\d}{\d t} \overline{f}(\mathbf{z}(t),\overline{\mathbf{z}}(t)) & =  \left\langle \mu_{0}(t)\overline{\mathrm{d}f}(\mathbf{z}(t),\overline{\mathbf{z}}(t))+\overline{\mu}_{0}(t)\overline{\d }f(\mathbf{z}(t),\overline{\mathbf{z}}(t)),\frac{\d}{\d t} \mathbf{z}(t)\right\rangle \\
 &   +\left\langle \frac{\d}{\d t} \mathbf{z}(t),\mu_{0}(t)\overline{\mathrm{d}f}(\mathbf{z}(t),\overline{\mathbf{z}}(t))+\overline{\mu}_{0}(t)\overline{\d }f(\mathbf{z}(t),\overline{\mathbf{z}}(t))\right\rangle \\
 & \stackrel{\mathrm{by} (\ref{eq:3.3})}{=}  \lambda_{0}(t)(\mathrm{\left\langle \mathbf{z}(t),\frac{\d}{\d t} \mathbf{z}(t)\right\rangle +\left\langle \frac{\d}{\d t} \mathbf{z}(t),\mathbf{z}(t)\right\rangle })\\
 & =  \lambda_{0}(t)  \frac{\d}{\d t} \|  \mathbf{z}(t)\|^{2}
 \end{array}
\end{equation*}
shows that $\ord_{t}(\lambda_{0}(t)  \frac{\d}{\d t} \|\mathbf{z}(t)\|^{2})\geq\alpha+\beta\geq 0$. But since $\mathrm{ord_{t}}(\mathbf{z}(t))<0$, this implies that
 $\lim_{t\to 0}\left|\lambda_{0}(t)\right|\Vert\mathbf{z}(t)\Vert^{2}=0$. Note that this limit holds true for $\lambda(t)\equiv0$ too. 

From the last limit, by using \eqref{eq:3.3}, we get:
\begin{equation}\label{eq:3.4}
\underset{t\rightarrow0}{\lim}\|\mathbf{z}(t)\| \| \mu_{0}(t)\overline{\mathrm{d}f}(\mathbf{z}(t),\overline{\mathbf{z}}(t))+\overline{\mu}_{0}(t)\overline{\d }f(\mathbf{z}(t),\overline{\mathbf{z}}(t))\|  =0
\end{equation}
which, by \eqref{lem:Let--be-2}, implies
$\underset{t\rightarrow0}{\lim}\|\mathbf{x}(t)\| \|\nu(\mathbf{x}(t))\| =0$,
showing that $c\in K_{\infty}(f)$.
\end{proof}

\begin{theorem}\label{t:fib} \textbf{Fibration Theorem}\\
Let $f: \bC^n \to \bC$ be a mixed polynomial. Then the restriction:
\[ f_| : \bC^n \m f^{-1}(f(\Sing f)\cup S(f)) \to \bC \m f(\Sing f)\cup S(f) 
\]
is a locally trivial $C^\ity$ fibration over each connected component of $\bC \m (f(\Sing f)\cup S(f))$. 

In particular we have the inclusion $B(f) \subset f(\Sing f)\cup S(f)$.
\end{theorem}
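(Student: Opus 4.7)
My plan is to apply the classical Ehresmann argument for non-proper $C^{\infty}$ submersions, where the novel part is producing lifts of vector fields whose integral flow is complete on $f^{-1}$ of a disk. Pick any $c_0\in\bC\setminus(f(\Sing f)\cup S(f))$; by Proposition \ref{p:closed} this set is closed, so I fix a closed disk $\overline D\subset\bC$ centered at $c_0$ and disjoint from $f(\Sing f)\cup S(f)$. Producing on $f^{-1}(\overline D)$ a $C^{\infty}$ lift of each constant vector field $e$ of $\overline D$ whose flow is defined for all $t$ with $c_0+te\in \overline D$ suffices, since integrating two independent such lifts along straight paths in $\overline D$ gives a $C^{\infty}$ diffeomorphism $f^{-1}(\overline D)\simeq\overline D\times f^{-1}(c_0)$ commuting with $f$, and hence the local triviality everywhere outside $f(\Sing f)\cup S(f)$, from which the inclusion $B(f)\subset f(\Sing f)\cup S(f)$ follows.

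The first concrete step is to extract from $\overline D\cap S(f)=\emptyset$ a radius $R>0$ such that $M(f)\cap f^{-1}(\overline D)\cap\{\|\bz\|\geq R\}=\emptyset$. Otherwise some sequence $\{\bz_k\}\subset M(f)$ with $\|\bz_k\|\to\infty$ has $f(\bz_k)\in\overline D$, and after extracting so that $f(\bz_k)\to c^*\in\overline D$, Definition \ref{d:s} places $c^*\in S(f)$, a contradiction. By Lemma \ref{l:M(f)geometric} this means that for every $r\geq R$ and every $c\in\overline D$ the fibre $f^{-1}(c)$ is transverse to the sphere $S_r^{2n-1}$, so the restriction $f|_{S_r^{2n-1}\cap f^{-1}(\overline D)}$ is a submersion to $\overline D$. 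On the complementary compact set $\overline{B_R}\cap f^{-1}(\overline D)$, $f$ is already a submersion since $\overline D\cap f(\Sing f)=\emptyset$, and it is obviously proper there.

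I would then construct the lift $V_e$ of a constant vector field $e$ on $\overline D$ in two pieces. Inside a neighbourhood of the compact core $\overline{B_R}\cap f^{-1}(\overline D)$ the submersion property gives local lifts that I patch via a partition of unity into $V_e^{\mathrm{int}}$. In the outer region $\{\|\bz\|>R\}\cap f^{-1}(\overline D)$, the transversality of fibres to spheres lets me choose local smooth lifts lying in $T S_r^{2n-1}$; patching with a partition of unity yields $V_e^{\mathrm{ext}}$ whose flow preserves $\|\bz\|$. A last partition of unity subordinate to the two open regions glues them into a global $C^{\infty}$ lift $V_e$. I expect the main obstacle to be the completeness of the flow of $V_e$, which is precisely what the setup of \S 2.1 is designed to ensure: once a trajectory crosses the level $\|\bz\|=R+1$ it is governed by $V_e^{\mathrm{ext}}$ and therefore stays on a single sphere, so it cannot run off to infinity in finite time, while trajectories that remain in $\overline{B_R}\cap f^{-1}(\overline D)$ are confined to a compact set. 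Hence the flow of $V_e$ is defined for all times with $c_0+te\in\overline D$, and Ehresmann's scheme produces the desired trivialisation.
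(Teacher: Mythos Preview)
Your argument is correct and rests on the same key observation as the paper's proof: from $\overline D\cap S(f)=\emptyset$ one extracts a radius $R$ with $M(f)\cap f^{-1}(\overline D)\cap\{\|\bz\|\ge R\}=\emptyset$, so that fibres over $\overline D$ are transverse to all large spheres. Where you diverge is in how this transversality is exploited. You build lifts of constant vector fields by hand, forcing the exterior part to be tangent to spheres so that trajectories cannot escape to infinity, and then argue completeness of the flow. The paper instead considers the auxiliary map $(f,\rho):f^{-1}(\overline D)\setminus B_R\to \overline D\times[R,\infty[$, which is \emph{proper} and a submersion (its critical set is exactly $M(f)$), applies Ehresmann's theorem directly to it, and then composes with the projection $\overline D\times[R,\infty[\to\overline D$; the compact core $f^{-1}(\overline D)\cap\overline{B_R}$ is handled by ordinary Ehresmann, and the two trivial pieces are glued along the common boundary sphere.

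The paper's route is shorter and sidesteps the slightly delicate completeness discussion (your claim that a trajectory ``once it crosses $\|\bz\|=R+1$ stays on a single sphere'' is morally right but would need a line or two more to be airtight, since in the transition annulus the glued field is not purely tangential). Your approach, on the other hand, is self-contained and makes the mechanism of control at infinity completely explicit, which is pedagogically valuable and closer to how one proves Ehresmann's theorem in the first place.
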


\begin{remark}
 In the setting of mixed functions, our Theorem \ref{t:fib}
 extends \cite[Theorem 3.1]{KOS} since, by our Proposition \ref{p:S_contained_in_K} and Remark \ref{r:compare_reg} we have $S(f) \subsetneq K_\ity(f)$ and therefore we get a sharper approximation of the bifurcation set $B(f)$.  While our proof does not explicitly bound the dimension of $S(f)$, it follows from the preceding inclusion and from \cite[Theorem 3.1]{KOS}  that $S(f)$ has real dimension less than $2$.
\end{remark}

\begin{proof}[Proof of the Fibration Theorem]
Let $c\not\in f(\Sing f)\cup S(f)$. Then there is a closed disk $D$ centered at $c$ such that $D\subset \bC\m f(\Sing f)\cup S(f)$, since the latter is an open set by Proposition \ref{p:closed}. Let us first observe that there exists $R_0 \gg 0$ such that $M(f)\cap f^{-1}(D)\m  B_{R_0}^{2n} = \emptyset$.  Indeed, if this were not true, then there would exist a sequence $\{\mathbf{z}_{k}\}_{k\in\mathbb{N}}\subset f^{-1}(D)\cap M(f)$
such that $\underset{k\rightarrow\infty}{\lim}\Vert\mathbf{z}_{k}\Vert=\infty$.
Since $D$ is compact, there is a sub-sequence
$\{\mathbf{z}_{k_i}\}_{i\in\mathbb{N}}\subset M(f)$ and $c_0\in D$ such
that $\underset{i\rightarrow\infty}{\lim}\Vert\mathbf{z}_{k_i}\Vert=\infty\,\mathrm{and}\,\underset{i\rightarrow\infty}{\lim}f(\mathbf{z}_{k_i})=c_0$, 
which contradicts $D\subset\mathbb{C}\setminus S(f)$.

We claim that the map:
\begin{equation}\label{eq:triv2}
 f_| : f^{-1}(D)\m B_{R_0}^{2n} \to D
\end{equation}
is a trivial fibration on the manifold with boundary $(f^{-1}(D)\m B_{R_0}^{2n}, f^{-1}(D)\cap S_{R}^{2n-1})$, for any $R \ge R_0$. Indeed, this is a submersion by hypothesis but it is not proper, so one cannot apply Ehresmann's theorem directly. Instead, we consider the map $(f,\rho) : f^{-1}(D)\m B_{R_0}^{2n} \to D\times [R_0, \infty[$. As a direct consequence of its definition, this is a proper map. It is moreover a submersion since $\Sing (f,\rho) \cap f^{-1}(D)\m B_{R_0}^{2n} =\emptyset$ by the above remark concerning the set $M(f)$, which is nothing else but $\Sing (f,\rho)$. We then apply  Ehresmann's theorem to $(f,\rho)$ and conclude that it is a locally trivial fibration, hence a trivial fibration over $D\times [R_0, \infty[$. 
It follows that our map (\ref{eq:triv2}) is a trivial fibration too since it is the composition $\pi \circ (f,\rho)$, where $\pi : D\times [R_0, \infty[ \to D$ denotes the trivial projection.

Next observe that, since $D\cap f(\Sing f) = \emptyset$, the restriction:
\begin{equation}\label{eq:triv1}
 f_| : f^{-1}(D)\cap \bar B_{R_0}^{2n} \to D
\end{equation}
 is a proper submersion on the manifold with boundary $(f^{-1}(D)\cap \bar B_{R_0}^{2n}, f^{-1}(D)\cap S_{R_0}^{2n-1})$ and therefore a locally trivial fibration by Ehresmann's theorem, hence a trivial fibration over $D$.

We finally glue the two trivial fibrations \eqref{eq:triv1} and \eqref{eq:triv2} by the standard procedure.
\end{proof}

\section{Bifurcation values of Newton non-degenerate mixed polynomials}\label{s:newton}

We prove an estimation for the set of $\rho$-nonregular values at infinity under the condition of Newton non-degeneracy of the mixed polynomial.  We first introduce the necessary notions, then state the result. 
Let $\mathbb{C}^{*n} : = (\bC^*)^n$. 
\subsection{Newton boundary at infinity and non-degeneracy}\label{ss:newton}

Let $f$ be a mixed polynomial:

\begin{definition}\label{def:The-Newton-polyhedron}
We call $\mathrm{supp}\left(f\right)=\left\{ \nu+\mu\in\mathbb{N}^{n}\mid c_{\nu,\mu}\neq0\right\}$ 
the \textit{support} of $f$. We say that
$f$ is \emph{convenient} if the intersection of $\mathrm{supp}\left(f\right)$
with each coordinate axis is non-empty. We denote by $\overline{\mathrm{supp}(f)}$ the
convex hull of the set $\mathrm{supp}(f)\setminus\{0\}$. 
\noindent
The \emph{Newton polyhedron} of
a mixed polynomial $f$,
denoted by $\Gamma_{0}(f)$, is the convex hull of the set $\left\{ 0\right\} \cup\mathrm{supp}(f)$. The \emph{Newton boundary at infinity}, denoted by $\Gamma^+(f)$, is the union of the faces of the polyhedron
$\Gamma_{0}(f)$ which do not contain the origin. By ``face'' we
mean face of any dimension. 
\end{definition}

\begin{definition}\label{def:The-mixed-polynomial}
For any face $\Delta$ of $\overline{\mathrm{supp}(f)}$, we denote
the restriction of $f$ to $\Delta \cap \supp (f)$ by $f_{\Delta} :=  \sum_{\nu +\mu \in\Delta \cap \supp (f)} c_{\nu,\mu}\mathbf{z}^{\nu}\overline{\mathbf{z}}^{\mu}$.
The mixed polynomial $f$ is called
\emph{ non-degenerate} if
$\Sing f_{\Delta}\cap f_\Delta^{-1}(0) \cap \bC^{*n} = \emptyset$, for each face $\Delta$ of $\Gamma^+(f)$. Following Oka's terminology \cite{Oka2}, we say that $f$ is \textit{Newton strongly non-degenerate} if   $\Sing f_{\Delta} \cap \bC^{*n} = \emptyset$ for any face $\Delta$ of $\Gamma^+(f)$.

\end{definition}
The later condition is stronger and in general \textit{not equivalent} to the former, but they coincide in the holomorphic setting since $f_{\Delta}$ is quasi-homogeneous of non-zero degree.

  Kushnirenko \cite{Ku} had first introduced the Newton boundary of holomorphic \textit{germs}, which we denote by $\Gamma_-$ and which is different from $\Gamma^+$. Recently, Mutsuo Oka took over the program in the setting of mixed function germs and proved, among other results, the following \textit{local fibration theorem} :

\begin{theorem} \cite[Lemma 28, Theorem 29]{Oka2} \ \\
Let  $f : (\bC^n, 0) \to (\bC, 0)$ be the germ of a mixed polynomial which has a strongly non-degenerate and convenient  Newton boundary $\Gamma_-(f)$. Then $f$ has an isolated singularity at $0$ and the mapping:
\[ f_| : B_\e^{2n}\cap f^{-1}(D_\delta^*) \to D_\delta^* . \]
is a locally trivial fibration, for any small enough $\e > 0$ and $0 < \delta\ll \e$. \fin
\end{theorem}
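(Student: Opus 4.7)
The plan is to address the two claims in turn: first, that $f$ has an isolated singularity at the origin, and second, that the tube map is a locally trivial fibration. Both arguments follow the blueprint used to prove Theorem \ref{t:atyp_Newton} and Theorem \ref{t:fib} in this paper, but applied locally at $0$; in place of Lemma \ref{l:curve-selection} one uses the classical Curve Selection Lemma of Milnor at the origin.

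For the isolated singularity claim, I would assume for contradiction that $\Sing f$ accumulates at $0$. The Curve Selection Lemma applied to the semi-algebraic set $\Sing f\setminus\{0\}$ produces a real analytic arc $\bz(t)\to 0$ with $\bz(t)\in\Sing f$. Let $I\subseteq\{1,\ldots,n\}$ be the set of indices $i$ for which $z_i(t)\not\equiv 0$, and expand $z_i(t)=a_i t^{p_i}+\mathrm{h.o.t.}$ with $a_i\neq 0$ and $p_i\in\bQ_{>0}$. The positive weight vector $P=(p_i)_{i\in I}$ picks out a face $\Delta$ of $\Gamma_-(f|_{\bC^I})$: the one minimizing $\langle P,\cdot\rangle$ on $\supp(f|_{\bC^I})$. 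Substituting $\bz(t)$ into the mixed singular equation of Lemma \ref{pro:-is-a} and extracting the lowest order terms in $t$, the singular condition for $f$ collapses to the singular condition for the face polynomial $f_\Delta$ evaluated at $(a_i)_{i\in I}\in\bC^{*I}$, contradicting $\Sing f_\Delta\cap\bC^{*I}=\emptyset$. The convenience hypothesis is used to ensure that the restriction $f|_{\bC^I}$ itself has a Newton boundary meeting each axis of $\bC^I$, so that $\Delta$ is a compact positive-dimensional face and strong non-degeneracy passes from $\Gamma_-(f)$ to $\Gamma_-(f|_{\bC^I})$.

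For the fibration claim, the parallel Curve Selection argument shows that the Milnor set $M(f)$ does not accumulate at $0$ inside $f^{-1}(\bC^*)$: if a sequence $\bz_k\in M(f)$ satisfies $\bz_k\to 0$ and $f(\bz_k)\to 0$, then a real analytic arc extracted by the CSL, together with the same face-reduction, produces a point of $\Sing f_\Delta\cap f_\Delta^{-1}(0)\cap\bC^{*I}$, contradicting Newton non-degeneracy of $f_\Delta$. Choosing $\e_0>0$ and $\delta_0\ll\e_0$ such that $M(f)\cap(\bar B_{\e_0}^{2n}\setminus\{0\})\cap f^{-1}(D_{\delta_0}^*)=\emptyset$, every fibre $f^{-1}(c)$ with $0<|c|<\delta_0$ is transverse to every sphere $S_r^{2n-1}$ with $0<r\le\e_0$. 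One then concludes exactly as in the proof of Theorem \ref{t:fib}: the map $(f,\rho):(\bar B^{2n}_\e\setminus\{0\})\cap f^{-1}(D^*_\delta)\to D^*_\delta\times(0,\e]$ is a proper submersion, hence a locally trivial fibration by Ehresmann, and composing with the projection onto $D^*_\delta$ (which is trivial in the $\rho$ factor) gives the desired fibration $f_|$.

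The main obstacle is the face-reduction step common to both (i) and (ii): one must verify that after weighted specialization along the analytic arc, the leading $t$-coefficients of the equations $\overline{\d f}(\bz(t),\overline{\bz(t)})=\mu(t)\overline{\d}f(\bz(t),\overline{\bz(t)})$ with $|\mu(t)|=1$ (respectively, the Milnor set equations of Definition \ref{d:milnorset}) reproduce exactly the singular equations of $f_\Delta$ on the torus $\bC^{*I}$. This is the mixed analogue of Kouchnirenko's reduction and demands a careful face-by-face separation of which monomials of $\d f$ and $\overline{\d}f$ attain the lowest $t$-order. Convenience is essential at two places: it guarantees that the face $\Delta$ is compact (so the specialization $f_\Delta$ is a genuine polynomial rather than a possibly trivial truncation), and it makes strong non-degeneracy descend to every coordinate subspace $\bC^I$.
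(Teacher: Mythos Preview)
The paper does not prove this statement: it is quoted from Oka \cite{Oka2} (his Lemma 28 and Theorem 29) and closed with the \fin\ symbol immediately after the statement, with no argument given. So there is no ``paper's own proof'' to compare against.

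That said, your sketch is a reasonable local mirror of the paper's arguments at infinity (Theorems \ref{t:atyp_Newton} and \ref{t:fib}), and it is essentially the strategy Oka uses. Two points deserve care if you flesh it out. First, in the local setting all weights $p_i$ are \emph{positive}, so the face $\Delta$ you select lies on the \emph{local} Newton boundary $\Gamma_-(f^I)$ and the minimal value $d_{\mathbf p}$ is positive; the sign bookkeeping in the $J=\emptyset$ step (the analogue of \eqref{eq:-5}--\eqref{eq:innerprod}) changes accordingly, and you should check that the inner-product contradiction still goes through. Second, for part (ii) you assert that the face reduction of the Milnor-set equation yields a point of $\Sing f_\Delta\cap f_\Delta^{-1}(0)\cap\bC^{*I}$, but the vanishing $f_\Delta(\mathbf a)=0$ is not automatic from the Milnor-set equation alone: it comes from comparing $d_{\mathbf p}$ with $\ord_t f(\bz(t))$ exactly as in \eqref{eq:-3}, and here you need $d_{\mathbf p}<\ord_t f(\bz(t))$, which follows from $f(\bz(t))\to 0$ together with convenience. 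With those two verifications made explicit, your outline is sound.
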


In the setting of 
holomorphic polynomials, similar objects were studied by Broughton \cite{Br2}. He proved for instance that if $f$ is a complex polynomial with Newton non-degenerate and convenient polyhedron $\Gamma^+(f)$, then $S(f) = \emptyset$.  
  Later, N\'emethi and Zaharia  \cite{NZ1} dropped the convenience condition, defined the set $\mathfrak{B}$ of ``bad faces'' of $\overline{\supp f}$ and proved the result quoted in \S \ref{s:intro}.

\begin{remark}
If $f$ satisfies the conditions of Theorem \ref{t:atyp_Newton} except for $f(0)=0$, then we replace $f$ by $h=f-f(0)$ and apply to it Theorem \ref{t:atyp_Newton}. Since $\overline{\mathrm{d}f}(\mathbf{z},\overline{\mathbf{z}})=\overline{\mathrm{d}h}(\mathbf{z},\overline{\mathbf{z}})$
and $\overline{\d }f(\mathbf{z},\overline{\mathbf{z}})=\overline{\d }h(\mathbf{z},\overline{\mathbf{z}})$,
we get $M(f)=M(h)$ and $c\in S(f) \Leftrightarrow c-f(0)\in S(h)$. 
\end{remark}

Before giving the proof in \S \ref{s:proof}, we need to define the ingredients and prove several preliminary facts.

\subsection{The ``bad'' faces of the support}
 We consider a mixed polynomial $f : \bC^n \to \bC$, $f\not\equiv 0$.
\begin{definition}\label{d:badface}
 A face $\Delta\subseteq\overline{\mathrm{supp}(f)}$
is called \emph{bad} whenever: 

\begin{lyxlist}{00.00.0000}
\item [{(i)}] there exists a hyperplane $H\subset\mathbb{R}^{n}$ with
equation $a_{1}x_{1}+\cdots+a_{n}x_{n}=0$ (where $x_{1},\ldots,x_{n}$
are the coordinates of $\mathbb{R}^{n}$) such that: 

\item [{$\qquad\qquad$}] (a) there exist $i$ and $j$ with $a_{i}<0$
and $a_{j}>0$, \medskip{}
\item [{$\qquad\qquad$}] (b) $H\cap\overline{\mathrm{supp}(f)}=\Delta$.\end{lyxlist}

Let $\mathfrak{B}$ denote the set of bad faces of $\overline{\mathrm{supp}(f)}$.
 A face $\Delta \in\mathfrak{B}$ is called \textit{strictly bad} if it satisfies in addition the following condition:

\smallskip

\begin{lyxlist}{00.00.0000}
\item [{(ii)}] the affine subspace of the same dimension spanned by $\Delta$
contains the origin.
\end{lyxlist}

\end{definition}
\begin{remark}\label{r:bad}
In our Theorem \ref{t:atyp_Newton} we use the above definition for ``bad faces''.  For holomorphic mappings,   the set $\mathfrak{B}$ of bad faces used in the main formula \eqref{eq:nemethizaharia} of \cite{NZ1} corresponds to our definition of ``strictly bad faces''. 

Let us observe that not all bad faces are strictly bad. 
Nevertheless,  our Theorem \ref{t:atyp_Newton}(a) reduces in case of complex polynomials to precisely the statement \eqref{eq:nemethizaharia} of \cite{NZ1}.
 If $\Delta$ is a bad face which is not strictly bad, then it follows from the definitions that $\Delta$ is a face of $\Gamma^+(f)$. If we assume that $f$ is non-degenerate, then $\Delta$ is a non-degenerate face. If $f_\Delta$ is moreover holomorphic, then it follows that  $\Delta$ is strongly non-degenerate. Indeed,  there exists a hyperplane $V$ not passing through $0$ and such that $V\cap \overline{\mathrm{supp}(f)}=\Delta$, thus $f_\Delta$ is also weighted homogeneous of degree $\not= 0$ and therefore $\Sing f_\Delta \subset \{f_\Delta =0\}$. This shows in particular that in case of holomorphic $f$, the bad faces which are not strictly bad do not contribute with nonzero values in the formula of our Theorem \ref{t:atyp_Newton}(a), hence indeed only the strictly bad faces may play a role.
\end{remark}

The following lemma will be used in the proof of our theorem.

\begin{lemma}\label{lem:Let--be}
Let $l_{\mathbf{p}}(x)=\sum_{i=1}^{n}p_{i}x_{i}$
be a linear function such that $p=\underset{1\leq i\leq n}{\min}\{p_{i}\}<0$. We consider the
restriction of $l_{\mathbf{p}}(x)$ to $\overline{\mathrm{supp}(f)}$
and denote by $\Delta_\mathbf{p}$ the unique maximal face of $\overline{\mathrm{supp}(f)}$ (with respect to
the inclusion of faces) where $l_{\mathbf{p}}(x)$ takes its minimal
value $d_{\mathbf{p}}$. Let $d_{\mathbf{p}}\leq0$. 

\begin{lyxlist}{00.00.0000}
\rm \item [{(a)}] \it If $d_{\mathbf{p}}<0$, then $\Delta_\mathbf{p}$ is a face of $\Gamma^+(f)$.
\rm \item [{(b)}] \it If $d_{\mathbf{p}}=0$, then either $\Delta_\mathbf{p}$ is a face
of $\Gamma^+(f)$ or $\Delta_\mathbf{p}$ satisfies condition (ii) of Definition
\ref{d:badface}.
\end{lyxlist}
\end{lemma}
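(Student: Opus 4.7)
My approach is purely convex-geometric, exploiting the relationship between the polytope $\overline{\mathrm{supp}(f)}$ and the enlarged polytope $\Gamma_{0}(f)=\mathrm{conv}(\{0\}\cup\mathrm{supp}(f))$. The key tool is the standard fact that the minimizing set of a linear functional on a polytope is a face; I would apply this in turn to each of the two polytopes, together with the transitivity of the face relation (a face of a face is a face).

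For part (a), I would first observe that $l_{\mathbf{p}}(0)=0$ combined with $d_{\mathbf{p}}<0$ forces the minimum of $l_{\mathbf{p}}$ on $\Gamma_{0}(f)$ to equal $d_{\mathbf{p}}$ and to be attained only among points of $\overline{\mathrm{supp}(f)}$ distinct from $0$. The minimizing face of $l_{\mathbf{p}}$ on $\Gamma_{0}(f)$ must then coincide with $\Delta_{\mathbf{p}}$, by the maximality clause in the definition of $\Delta_{\mathbf{p}}$. Since $\Delta_{\mathbf{p}}$ is thus a face of $\Gamma_{0}(f)$ not containing the origin, by definition it is a face of $\Gamma^{+}(f)$.

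For part (b), with $d_{\mathbf{p}}=0$, the functional $l_{\mathbf{p}}$ is non-negative on $\Gamma_{0}(f)$ and vanishes both on $\Delta_{\mathbf{p}}$ and at $0$; its minimizing face on $\Gamma_{0}(f)$ is then $F:=\mathrm{conv}(\Delta_{\mathbf{p}}\cup\{0\})$, which is a face of $\Gamma_{0}(f)$ necessarily containing the origin. I would then split according to whether $0\in\mathrm{aff}(\Delta_{\mathbf{p}})$. If yes, condition (ii) of Definition \ref{d:badface} is satisfied and we are done. If not, I would choose an affine functional $l$ with $l\equiv 1$ on $\mathrm{aff}(\Delta_{\mathbf{p}})$ and $l(0)=0$; on $F$ this $l$ ranges in $[0,1]$ and attains its maximum exactly on $\Delta_{\mathbf{p}}$, so $\Delta_{\mathbf{p}}$ is a face of $F$. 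By transitivity of the face relation, $\Delta_{\mathbf{p}}$ is then a face of $\Gamma_{0}(f)$, and as $0\notin\Delta_{\mathbf{p}}$ in this case it lies in $\Gamma^{+}(f)$.

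The only delicate point is the dichotomy in (b): the same $\Delta_{\mathbf{p}}$ may fail to be a face of the enlarged polytope $\Gamma_{0}(f)$ precisely when the origin and $\Delta_{\mathbf{p}}$ span together a higher-dimensional face $F$. The case distinction above reflects exactly this: either $0$ is absorbed into $\mathrm{aff}(\Delta_{\mathbf{p}})$ (yielding (ii)), or $0$ can be separated from $\Delta_{\mathbf{p}}$ by a hyperplane inside $F$ (yielding membership in $\Gamma^{+}(f)$). I do not anticipate further technical obstruction beyond setting up these auxiliary hyperplanes cleanly.
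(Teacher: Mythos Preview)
Your argument is correct and follows essentially the same route as the paper: both proofs compare $\Delta_{\mathbf p}$ with the minimizing face of $l_{\mathbf p}$ on the larger polytope $\Gamma_0(f)=\mathrm{conv}(\{0\}\cup\overline{\mathrm{supp}(f)})$, and both split case~(b) according to whether $0$ lies in the affine span of $\Delta_{\mathbf p}$. The only cosmetic difference is in how the last step of~(b) is carried out: the paper invokes the cone description $\Gamma_0(f)=\mathrm{cone}_0(\Gamma^+(f))$ and a dimension count ($\dim\mathrm{cone}_0(\Delta_{\mathbf p})>\dim\Delta_{\mathbf p}$) to conclude that $\Delta_{\mathbf p}$ is a face of $\Gamma^+(f)$, whereas you produce an explicit affine functional separating $0$ from $\Delta_{\mathbf p}$ inside $F$ and then appeal to transitivity of the face relation. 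Your version is arguably a bit more transparent, but the underlying geometry is identical.
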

\begin{proof}
Let us first remark that from Definition \ref{def:The-Newton-polyhedron}
we have $\Gamma_{0}(f)=\mathrm{cone_{0}}(\Gamma^+(f))$, where $\mathrm{cone_{0}}(A)$
denotes the compact cone over the set $A$ with vertex the origin.
For each face $\Delta$ of $\Gamma_{0}(f)$ we have that either $\Delta$ is a face of $\Gamma^+(f)$ or $\Delta\ni 0$ and in this case we
have $\Delta =\mathrm{cone_{0}}(\Delta\cap\overline{\mathrm{supp}(f)})=\mathrm{cone_{0}}(\Delta \cap\Gamma^+(f))$.

Next, considering the restriction of $l_{\mathbf{p}}(x)$ to $\Gamma_{0}(f)$,
we denote by $\Delta_{1}$ the maximal face of $\Gamma_{0}(f)$
where $l_{\mathbf{p}}(x)$ takes its minimal value $d$. Note 
that $l_{\mathbf{p}}(x)$ can not attain its minimal value $d$ at
interior points of $\Gamma_{0}(f)$. Since $\Gamma^+(f)\subset\overline{\mathrm{supp}(f)}\subset\Gamma_{0}(f)$,
we have $d\leq d_{\mathbf{p}}$. 

(a). If $d_{\mathbf{p}}<0$ then  it follows by our initial remark that $\Delta_{1}$
is a face of $\Gamma^+(f)$, since otherwise we have $0\in\Delta_{1}$
and $d=0$. We therefore get $\Delta_\mathbf{p}=\Delta_{1}\subset\Gamma^+(f)$
and $d=d_{\mathbf{p}}$.

(b). If $d_{\mathbf{p}} =0$ and $\Delta_{1}$ is not a face of $\Gamma^+(f)$, then by the same initial remark we have $\Delta_{1}\ni0$ and therefore
$d=0$. Since $\Delta_{1}$ is the maximal face of $\Gamma_{0}(f)$
where $l_{\mathbf{p}}(x)$ takes its minimal value $d$, we get $\Delta_\mathbf{p}\subset\Delta_{1} \subset H$, where $H$ denotes the hyperplane $\{x\in\mathbb{R}^{n}\mid l_{\mathbf{p}}(x)=0\}$. 
We then have $\Delta_\mathbf{p}=\overline{\mathrm{supp}(f)}\cap H$, $\Delta_{1}=\Gamma_{0}(f)\cap H$,
and therefore $\Delta_\mathbf{p}=\Delta_{1}\cap\overline{\mathrm{supp}(f)}$.
Let us assume that $\Delta_\mathbf{p}$ does not verify condition (ii) of Definition
\ref{d:badface}, namely that we have $\dim\mathrm{cone_{0}}(\Delta_\mathbf{p})>\dim\Delta_\mathbf{p}$.
This implies that $\Delta_\mathbf{p}$ does not contain any interior point of $\mathrm{cone_{0}}(\Delta_\mathbf{p})$. By the initial remark,
$\Delta_{1}=\mathrm{cone_{0}}(\Delta_{1}\cap\Gamma^+(f))=
\mathrm{cone_{0}}(\Delta_\mathbf{p})$.
Then $\Delta_\mathbf{p}$ is a face of $\Gamma^+(f)$, which contradicts our assumption. 
\end{proof}

 Let $I \subset \{ 1, \ldots , n\}$. We shall use the following notations:

$\mathbb{C}^{I}=\left\{ (z_{1},\ldots,z_{n})\in \bC^n \mid z_{j}=0,j\notin I\right\} $,
and similarly $\mathbb{R}_{\ge 0}^{I}$,
 $\mathbb{C}^{*I} : =\mathbb{C}^{I} \cap \mathbb{C}^{*n}$,
$f^{I}:=f_{\mid\mathbb{C}^{I}}$.

From Definition \ref{def:The-Newton-polyhedron}, the faces of $f^{I}$
are among the faces of $f$, so we have the following: 

\begin{remark}\label{r:f_I}
Let $f$ be a mixed Newton (strongly) non-degenerate polynomial.
If $I\subset\left\{ 1,2,\ldots,n\right\}$ is such that $f^{I}$ is
not identically zero then:

(1) $f^{I}$ is a mixed Newton (strongly) non-degenerate polynomial.

(2) $\Gamma^+(f^{I})=\Gamma^+(f)\cap\mathbb{R}_{\ge 0}^{I}$.
\end{remark}

\noindent
We shall use the following fact for the restriction of $f$ to its
bad faces.

\begin{remark}\label{r:Delta_strongly}
 If a mixed polynomial $f$ is Newton (strongly) non-degenerate then, for any bad face $\Delta\subset\overline{\mathrm{supp}(f)}$,
$f_{\Delta}$ is Newton (strongly) non-degenerate. Indeed, any face $\Delta^{'}$ of $\Gamma^+(f_{\Delta})$ is also a subface
of $\Delta$, hence a subface of $\Gamma^+(f)$. The Newton (strong) 
non-degeneracy of $f$ implies that the restriction $f_{\Delta}$ is also Newton (strongly) 
non-degenerate.
\end{remark}

\subsection{Newton non-degeneracy is an open condition}\label{ss:open}

For a fixed polyhedron $\Gamma$ which is the Newton boundary at infinity of some mixed polynomial, we may define the subset 
$U^s_\Gamma :=\{[c_{1},c_{2},\ldots,c_{m}]\in\bP_\bC^{m-1}\mid \mbox{the polynomial } f_c(\mathbf{z},\overline{\mathbf{z}}) = f(\mathbf{z},\overline{\mathbf{z}}, c) =\sum_{j=1}^{m}c_{j}\mathbf{z}^{\mu_{j}}\overline{\mathbf{z}}^{\nu_{j}}$  is Newton strongly non-degenerate and  $\Gamma^+(f_c)= \Gamma \}$.  Similarly we define the set $U_\Gamma \supset U^s_\Gamma$ by just dropping the word ``strongly'' in the above definition. Then:

\begin{proposition}\label{p:nondeg}
 The subsets $U_\Gamma\subset \bP^{m-1}$ and $U^s_\Gamma\subset U_\Gamma$ of Newton non-degenerate and, respectively,  strongly non-degenerate mixed polynomials, with fixed Newton boundary $\Gamma$ at infinity, are semi-algebraic open sets.
\end{proposition}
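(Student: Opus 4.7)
The assertion has two components, semi-algebraicity and openness, which I would address in turn. Semi-algebraicity is largely formal: the condition $\Gamma^{+}(f_c)=\Gamma$ is a Boolean combination of equalities and inequalities on the coefficients $c_j$ (requiring at each vertex of $\Gamma$ that some $c_j\neq 0$), hence semi-algebraic and \emph{open} in $\bP^{m-1}$. For each face $\Delta$ of $\Gamma$, Lemma \ref{pro:-is-a} identifies $\Sing f_{c,\Delta}\cap \bC^{*n}$ with the projection to $\bC^{*n}$ of the semi-algebraic set $\{(\mathbf{z},\mu)\in\bC^{*n}\times S^{1} : \overline{\d f_{c,\Delta}}(\mathbf{z})=\mu\,\overline{\d}f_{c,\Delta}(\mathbf{z})\}$. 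By Tarski--Seidenberg, both the strong non-degeneracy condition $\Sing f_{c,\Delta}\cap\bC^{*n}=\emptyset$ and its weaker variant $\Sing f_{c,\Delta}\cap f_{c,\Delta}^{-1}(0)\cap\bC^{*n}=\emptyset$ are semi-algebraic in $c$, hence so are their finite intersections $U^{s}_{\Gamma}$ and $U_{\Gamma}$.

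For openness of $U^{s}_{\Gamma}$, I would argue by contradiction: take $c_n\to c_0\in U^{s}_{\Gamma}$ with $c_n\notin U^{s}_{\Gamma}$. The openness of $\{\Gamma^{+}(f_c)=\Gamma\}$ gives $\Gamma^{+}(f_{c_n})=\Gamma$ for $n\gg 0$, so some face $\Delta\subset\Gamma$ (fixed after extraction of a subsequence) carries a singular point $\mathbf{z}_n\in\Sing f_{c_n,\Delta}\cap\bC^{*n}$. I would pick a strictly positive weight $\mathbf{p}\in\bZ^{n}_{>0}$ for which $\Delta$ is the face of $\Gamma^{+}$ minimizing $l_{\mathbf{p}}$. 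Under the induced $\bR_{>0}$-action $t\cdot z_i=t^{p_i}z_i$, $f_{c,\Delta}$ is quasi-homogeneous of degree $d:=l_{\mathbf{p}}(\Delta)$, so its Milnor singular locus is invariant; rescaling $\mathbf{z}_n$ to the compact ``weighted sphere'' $\|\mathbf{z}\|_{\mathbf{p}}:=(\sum|z_i|^{2/p_i})^{1/2}=1$ and passing to a subsequence, $\mathbf{z}_n\to\mathbf{z}_0$. If $\mathbf{z}_0\in\bC^{*n}$, continuity in Lemma \ref{pro:-is-a} immediately contradicts $c_0\in U^{s}_{\Gamma}$.

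The main obstacle is the case $\mathbf{z}_0\in\overline{\bC^{*n}}\setminus\bC^{*n}$, where the limit lies on the boundary of the ambient torus. I would resolve it with the classical Curve Selection Lemma of Milnor \cite{Mi} at a boundary point, applied to the semi-algebraic set $\tilde S:=\{(c,\mathbf{z})\in\bP^{m-1}\times\bC^{*n} : \|\mathbf{z}\|_{\mathbf{p}}=1,\,\Gamma^{+}(f_c)=\Gamma,\,\mathbf{z}\in\Sing f_{c,\Delta}\}$ at $(c_0,\mathbf{z}_0)\in\overline{\tilde S}$. This produces a real analytic curve $(c(t),\mathbf{z}(t))\in\tilde S$ for $t\in(0,\varepsilon)$ converging to $(c_0,\mathbf{z}_0)$; writing $z_i(t)=\alpha_i t^{a_i}+\mathrm{h.o.t.}$ with $\alpha_i\in\bC^{*}$ and $a_i\in\bZ_{\geq 0}$, set $\mathbf{a}=(a_1,\ldots,a_n)$. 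The sub-face $\Delta_{\mathbf{a}}\subseteq\Delta$ on which $\langle\,\cdot\,,\mathbf{a}\rangle$ attains its minimum on $\Delta$ is itself a face of $\Gamma^{+}(f_{c_0})=\Gamma$ and contains a vertex of $\Gamma$, so $f_{c_0,\Delta_{\mathbf{a}}}\not\equiv 0$.

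Finally, I would match leading $t$-orders coordinate-by-coordinate in the Milnor equation $\overline{\d f_{c(t),\Delta}}(\mathbf{z}(t))=\mu(t)\overline{\d}f_{c(t),\Delta}(\mathbf{z}(t))$ with $|\mu(t)|=1$. For each $k$, both sides have $t$-order $\geq d-a_k$, and the coefficients of $t^{d-a_k}$ are $\overline{\partial f_{c_0,\Delta_{\mathbf{a}}}/\partial z_k(\alpha)}$ and $\mu_0\,\partial f_{c_0,\Delta_{\mathbf{a}}}/\partial\overline{z}_k(\alpha)$ respectively, where $\mu_0:=\lim_{t\to 0}\mu(t)\in S^{1}$ and $\alpha=(\alpha_1,\ldots,\alpha_n)\in\bC^{*n}$; either both vanish or they give nontrivially the Milnor equation for $f_{c_0,\Delta_{\mathbf{a}}}$ at $\alpha$. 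In either case $\alpha\in\Sing f_{c_0,\Delta_{\mathbf{a}}}\cap\bC^{*n}$, contradicting strong non-degeneracy of $f_{c_0}$ on the face $\Delta_{\mathbf{a}}$. The same argument proves openness of $U_{\Gamma}$ after adding $f_{c,\Delta}(\mathbf{z})=0$ to the definition of $\tilde S$: its leading term additionally yields $f_{c_0,\Delta_{\mathbf{a}}}(\alpha)=0$, contradicting non-degeneracy. The delicate step is verifying the leading-order matching in all sub-cases, which is routine once $\Delta_{\mathbf{a}}$ is identified as the face of $\Delta$ dual to the weight $\mathbf{a}$ read from the Puiseux expansion of the selected curve.
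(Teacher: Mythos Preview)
Your approach is essentially the paper's: semi-algebraicity by Tarski--Seidenberg, openness by the Curve Selection Lemma plus a leading-order analysis along the selected arc that passes from $\Delta$ to a subface $\Delta_{\mathbf{a}}$. The final matching of leading $t$-coefficients in the Milnor equation is exactly the computation the paper carries out.

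There is, however, a genuine gap in the rescaling step. You assert that for every face $\Delta$ of $\Gamma^+$ there is a \emph{strictly positive} weight $\mathbf{p}\in\bZ^n_{>0}$ with $l_{\mathbf{p}}$ constant on $\Delta$ (so that $f_{c,\Delta}$ is radially quasi-homogeneous and the orbits of the associated $\bR_{>0}$-action admit a compact transversal in $\bC^{*n}$). This fails in general: take $\supp(f)=\{(2,1,0),(1,1,1),(0,0,1)\}$, so that $\Gamma_0(f)$ is the tetrahedron on these three points together with $0$; the $2$-face $\Delta$ opposite to $0$ lies in the hyperplane $x_1-x_2+x_3=1$, and the linear functionals constant on $\Delta$ are exactly the scalar multiples of $(1,-1,1)$, none of which is strictly positive. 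Without a positive weight the $\bR_{>0}$-orbits do not meet any compact ``weighted sphere'', and your compactness argument collapses before you ever reach the Curve Selection Lemma.

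The paper sidesteps this by not rescaling at all: it applies the Curve Selection Lemma (in the form that allows $\|\mathbf{z}(t)\|\to\infty$, cf.\ Lemma~\ref{l:curve-selection}) directly to the semi-algebraic set $V(\Delta_0)^*\subset\bC^n\times\bP^{m-1}$ at a boundary point over $c_0$, obtaining an analytic arc $(\mathbf{z}(t),c(t))$ with $c(t)\to c_0$ and leading exponents $p_i\in\bZ$ of arbitrary sign. Your subsequent leading-order analysis (identifying the subface $\Delta_{\mathbf{a}}$ and reading off the Milnor equation for $f_{c_0,\Delta_{\mathbf{a}}}$ at $\alpha$) then goes through verbatim. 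Replacing your rescaling step by this direct use of the curve selection at infinity closes the gap with no other changes.
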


\begin{remark} \label{r:strongly-nondeg}
 In the holomorphic setting one has ``Zariski-open'' instead of ``open'' and such a result was proved by Kushnirenko \cite{Ku} as a consequence of the Bertini-Sard theorem and of the the fact that ``strongly non-degenerate'' is equivalent to ``non-degenerate''.

Nevertheless in the real setting this proof does not apply and, in general, one does not have neither the connectedness, nor the density. Let us show by a simple example that Newton strong non-degeneracy does not insure  density.  Consider $f : \bC \to \bC$,  $f(z,\overline{z})=az^{2}+bz\overline{z}+c\overline{z}^{2}$,
where $a,b,c\in\mathbb{C}$. By direct computations using the homogeneity of $f$, we get that 
$f$ is Newton strongly non-degenerate if and only if $(\left|a\right|^{2}-\left|c\right|^{2})^{2}>\left|\overline{a}b-c\overline{b}\right|^{2}$. This inequality describes a homogeneous open set in $\bC^3$ which is not dense and not connected. Note also that $\supp (f)$ is a single point.
\end{remark}

\begin{proof}[Proof of Proposition \ref{p:nondeg}]
Let us show that $U^s_\Gamma$ is open and semi-algebraic.  The idea of this proof took its inspiration from Oka's alternate  proof in the holomorphic setting \cite[Appendix]{Oka3}.
For every face $\Delta\subset\Gamma$ we define:
\begin{align*}
V(\Delta) & :=\{(\mathbf{z},c)\in\mathbb{C}^{n}\times\mathbb{P}^{m-1} \mid 
\exists\lambda\in S^1_{1},\, \overline{\mathrm{d}f_{\Delta}}(\mathbf{z},\mathbf{\overline{\mathbf{z}}},c)=\lambda\overline{\mathrm{d}}f_{\Delta}(\mathbf{z},\overline{\mathbf{z}},c)\},\\
V(\Delta)^* & :=V(\Delta)\cap\{(\mathbf{z},c)\in\mathbb{C}^{n}\times\mathbb{P}^{m-1}\mid z_{1}z_{2}\ldots z_{n}\neq0\}.
\end{align*}

Note that $V(\Delta)$ is closed and that $\overline{V(\Delta)^*}=V(\Delta)$.
Let us consider the union $V^*=\cup_{\Delta\subset\Gamma}V(\Delta)^*$
and the projection $\pi:\mathbb{C}^{n}\times\mathbb{P}^{m-1}\rightarrow\mathbb{P}^{m-1}$.
Showing that $U^s_\Gamma$ is an open set means to prove that its complement $W=\pi(V^*)$ is a closed set. One  observes that $W$ is a semi-algebraic set,
since it is the projection of a semi-algebraic set. 

Let $c_{0}\in\overline{W}$.
By the Curve Selection Lemma, there exists a face $\Delta_0$ of $\Gamma$ and a real analytic path $(\mathbf{z}(t),c(t))\subset V(\Delta_0)^*$
defined on a small enough interval $\left]0,\varepsilon\right[$ such that $\lim_{t\to 0} c(t) = c_0$
and either $\lim_{t\to 0} \| \bz(t) \| = \ity$ or $\lim_{t\to 0} \bz(t)  = \bz_0 \in V(\Delta_{0})$.

Let then $z_{i}(t)= a_{i}t^{p_{i}} + \mathrm{h.o.t.}$ for $1\leq i\leq n$
where $a_{i}\neq0,p_{i}\in\mathbb{Z}$ and $\lambda(t)=\lambda_{0}+ \lambda_1 t + \mathrm{h.o.t.}$,
where $\lambda_{0}\in S^1_{1}$.  Let $\mathbf{a} :=(a_{1},\ldots,a_{n})\in\mathbb{C}^{*n}$,
$\mathbf{P}:=(p_{1},\ldots,p_{n})\in\mathbb{Z}^{n}$ and consider the
linear function $l_{\mathbf{P}}=\sum_{i=1}^{n}p_{i}x_{i}$ defined
on $\Delta_{0}$. Let $\Delta_{1}$ be the \emph{maximal face}
of $\Delta_{0}$ where $l_{\mathbf{P}}$ takes its minimal value,
say this value is $d_{\mathbf{P}}$.
We have: 
\[
\overline{\frac{\partial f_{\Delta_{1}}}{\partial z_{i}}}(\mathbf{a},\overline{\mathbf{a}}, c(t))t^{d_{\mathbf{p}}-p_{i}}+\mathrm{h.o.t.}=\lambda_{0}\frac{\partial f_{\Delta_{1}}}{\partial\overline{z}_{i}}(\mathbf{a},\overline{\mathbf{a}}, c(t))t^{d_{\mathbf{p}}-p_{i}}+\mathrm{h.o.t.}
\]
By taking the limit $c(t)\to c_0$ and focusing on the first terms of the expansions: 
\[
\overline{\mathrm{d}f_{\Delta_{1}}}(\mathbf{a},\overline{\mathbf{a}}, c_0)=\lambda_{0}\overline{\mathrm{d}}f_{\Delta_{1}}(\mathbf{a},\overline{\mathbf{a}}, c_0)
\]
we get that $(\mathbf{a},c_{0})\in V(\Delta_1)^* \subset V^*$, since $\mathbf{a}\in\mathbb{C}^{*n}$,
thus $c_{0}\in W$, which concludes the proof  that $W = \overline{W}$.

If in the definition of $V(\Delta)$ we add the supplementary equation $f_\Delta = 0$, then the same proof works for $U_\Gamma$ instead of  $U^s_\Gamma$. 
\end{proof}
%

\section{Proof of Theorem \ref{t:atyp_Newton}, some consequences and examples}\label{s:proof}

\subsection{Proof of Theorem \ref{t:atyp_Newton}(a).}

Let $c\in S(f)$. 
By Definition \ref{d:s} and Lemma \ref{l:curve-selection}, there
exist real analytic paths, $\mathbf{z}(t)$ in  $M(f)$, $\lambda(t)$ in $\mathbb{R}$ and $\mu(t)$ in  $\mathbb{C}^{*}$,
defined on a small enough interval $\left]0,\varepsilon\right[$, 
such that $\lim_{t\to 0} \|\mathbf{z}(t)\| = \infty$ and $\lim_{t\to 0} f(\mathbf{z}(t),\overline{\mathbf{z}}(t))= c$ and 
that:
\begin{equation}\label{eq:again}
\lambda(t)\mathbf{z}(t)=\mu(t)\overline{\mathrm{d}f}(\mathbf{z}(t),\overline{\mathbf{z}}(t))+\overline{\mu}(t)\overline{\d }f(\mathbf{z}(t),\overline{\mathbf{z}}(t)).
\end{equation}

Consider
the expansion of $f(\mathbf{z}(t),\overline{\mathbf{z}}(t))$. We have two situations, either:
\begin{equation}\label{eq:0}
f(\mathbf{z}(t),\overline{\mathbf{z}}(t)) \equiv c
\end{equation}
or
\begin{equation}\label{eq:-2}
f(\mathbf{z}(t),\overline{\mathbf{z}}(t))=c+bt^{\delta}+\mathrm{h.o.t.},\qquad\mathrm{where\ \ }  c, b\in\mathbb{C},\, b\neq0,\,\delta\in\mathbb{N}^{*}.
\end{equation}

\medskip

Let $I=\left\{ i\mid z_{i}(t)\not\not\equiv0\right\}$, observe that $I\neq\emptyset$
since $\underset{t\rightarrow0}{\lim}\Vert\mathbf{z}(t)\Vert=\infty$, and write: 
\begin{equation}\label{eq:-1}
z_{i}(t)=a_{i}t^{p_{i}}+\mathrm{h.o.t.,\qquad\mathit{\mathrm{where\ \ }a_{i}\neq}0}, \ p_{i}\in\mathbb{Z},\ i\in I.
\end{equation}
By eventually transposing the coordinates, we may assume that $I=\left\{ 1,\ldots,m\right\}$ and that $p= p_{1}\leq p_{2}\leq\cdots\leq p_{m}$.
Since $\underset{t\rightarrow0}{\lim}\Vert\mathbf{z}(t)\Vert=\infty$,
this implies $p =\underset{j\in I}{\min}\{p_{j}\}<0$. 
We denote $\mathbf{a}=(a_{1},\ldots,a_{m})\in\mathbb{C}^{*I}$, $\mathbf{p}=(p_{1},\ldots,p_{m})\in\bZ^{m}$
and consider the linear function $l_{\mathbf{p}}=\sum_{i=1}^{m}p_{i}x_{i}$
defined on $\overline{\supp(f^{I})}$.

Let us observe that since $f(0)=0$,  if $c\not= 0$, then $\overline{\supp(f^{I})}$ is not empty in both situations \eqref{eq:0} and \eqref{eq:-2}. 
 Let then $\Delta$
be the \emph{maximal face} of $\overline{\supp(f^{I})}$
where $l_{\mathbf{p}}$ takes its minimal value, say $d_{\mathbf{p}}$. We have:
\begin{equation}\label{eq:-3}
f(\mathbf{z}(t),\overline{\mathbf{z}}(t))=f^{I}(\mathbf{z}(t),\overline{\mathbf{z}}(t))=f_{\Delta}^{I}(\mathbf{a},\overline{\mathbf{a}})t^{d_{\mathbf{p}}}+\mathrm{h.o.t.}
\end{equation}
\noindent where $d_{\mathbf{p}}\leq\mathrm{ord_{t}}(f(\mathbf{z}(t),\overline{\mathbf{z}}(t)) = 0$. 

\noindent  In the following we keep the assumption\footnote{For the case
$c=0$, we refer to Remark \ref{rem:The-equality-}.}  $c\neq0$. 
 For $i\in I$ we have  the equalities: $\frac{\partial f}{\partial z_{i}}(\mathbf{z}(t),\overline{\mathbf{z}}(t))=\frac{\partial f^{I}}{\partial z_{i}}(\mathbf{z}(t),\overline{\mathbf{z}}(t))$
and $\frac{\partial f}{\partial\overline{z}_{i}}(\mathbf{z}(t),\overline{\mathbf{z}}(t))=\frac{\partial f^{I}}{\partial\overline{z}_{i}}(\mathbf{z}(t),\overline{\mathbf{z}}(t))$. Then we may write:
\begin{align}
\frac{\partial f}{\partial z_{i}}(\mathbf{z}(t),\overline{\mathbf{z}}(t)) & =\frac{\partial f_{\Delta}^{I}}{\partial z_{i}}(\mathbf{a},\overline{\mathbf{a}})t^{d_{\mathbf{p}}-p_{i}}+\mathrm{h.o.t.}\label{eq:-4}\\
\frac{\partial f}{\partial\overline{z}_{i}}(\mathbf{z}(t),\overline{\mathbf{z}}(t)) & =\frac{\partial f_{\Delta}^{I}}{\partial\overline{z}_{i}}(\mathbf{a},\overline{\mathbf{a}})t^{d_{\mathbf{p}}-p_{i}}+\mathrm{h.o.t.}\nonumber 
\end{align}

Consider the expansion of $\lambda(t)$, in case $\lambda(t)\not\equiv0$, and that of
$\mu(t)$:
\[
\lambda(t)=\lambda_{0}t^{\gamma}+\mathrm{h.o.t.,\qquad where\,\lambda_{0}\in\mathbb{R}}^{*},\,\gamma\in\mathbb{Z},\]
\[
\mu(t)=\mu_{0}t^{l}+\mathrm{h.o.t., \qquad where\,\mu_{0}\neq 0}, \ l\in\mathbb{Z}.
\]

Using all the expansions we get from \eqref{eq:again}, for any $i\in I$:
\[
(\mu_{0}\frac{\overline{\partial f_{\Delta}^{I}}}{\partial z_{i}}(\mathbf{a},\overline{\mathbf{a}})+\overline{\mu_{0}}\frac{\partial f_{\Delta}^{I}}{\partial\overline{z}_{i}}(\mathbf{a},\overline{\mathbf{a}}))t^{d_{\mathbf{p}}-p_{i}+l}+\mathrm{h.o.t.}=\lambda_{0}a_{i}t^{p_{i}+\gamma}+\mathrm{h.o.t.}
\]

Since $\lambda_{0}a_{i}\neq0$, comparing the orders of the two sides
in the above formula, we obtain:
\begin{equation}\label{eq:-5}
\mu_{0}\frac{\overline{\partial f_{\Delta}^{I}}}{\partial z_{i}}(\mathbf{a},\overline{\mathbf{a}})+\overline{\mu_{0}}\frac{\partial f_{\Delta}^{I}}{\partial\overline{z}_{i}}(\mathbf{a},\overline{\mathbf{a}})=\begin{cases}
\lambda_{0}a_{i}, & \quad\mathrm{if}\, d_{\mathbf{p}}-p_{i}+l=p_{i}+\gamma\\
\\0, & \quad\mathbf{\mathrm{if}}\, d_{\mathbf{p}}-p_{i}+l<p_{i}+\gamma\end{cases}
\end{equation}

Let $J=\left\{ j\in I\mid d_{\mathbf{p}}-p_{j}+l=p_{j}+\gamma\right\} $.
If we suppose that  $J\neq\emptyset$,
then $J=\{j\in I\mid p_{j}=p=\underset{j\in I}{\min}\{p_{j}\}<0\}$.In the situation \eqref{eq:-2} we have
$\frac{\mathrm{d}f(\mathbf{z}(t), \overline{\mathbf{z}}(t))}{\mathrm{d}t}=b\delta 
t^{\delta-1}+\mathrm{h.o.t}$ and on the other hand:
\begin{equation}\label{eq:compare} 
\begin{array}{cc}
{\displaystyle \frac{\mathrm{d}f(\mathbf{z}(t),\overline{\mathbf{z}}(t))}{\mathrm{d}t}} & ={\displaystyle \sum_{i=1}^{m}(\frac{\partial f}{\partial z_{i}}\cdot\frac{\partial z_{i}}{\partial t}+\frac{\partial f}{\partial\overline{z_{i}}}\cdot\frac{\partial\overline{z_{i}}}{\partial t})}={\displaystyle \sum_{i=1}^{m}(\frac{\partial f^{I}}{\partial z_{i}}\cdot\frac{\partial z_{i}}{\partial t}+\frac{\partial f^{I}}{\partial\overline{z_{i}}}\cdot\frac{\partial\overline{z_{i}}}{\partial t})}\\
 & =\left[\left\langle \mathbf{pa},\overline{\mathrm{d}f_{\Delta}^{I}}(\mathbf{a},\overline{\mathbf{a}})\right\rangle +\left\langle \mathbf{p}\mathbf{\overline{\mathbf{a}}},\overline{\bar{\d }f_{\Delta}^{I}(\mathbf{a},\overline{\mathbf{a}})}\right\rangle \right]t^{d_{\mathbf{p}}-1}+\mathrm{h.o.t.}
\end{array}
\end{equation}

\noindent
where $\mathbf{pa}=(p_{1}a_{1},\ldots,p_{m}a_{m})$. Comparing the
orders of the two expansions of $\frac{\mathrm{d}f(\mathbf{z}(t),\overline{\mathbf{z}}(t))}{\mathrm{d}t}$
and using the inequality $d_{\mathbf{p}}<\delta$ implied by $c\neq0$
(see after \eqref{eq:-3}), we find:

\begin{equation}\label{eq:-6}
\left\langle \mathbf{pa},\overline{\mathrm{d}f_{\Delta}^{I}}(\mathbf{a},\overline{\mathbf{a}})\right\rangle +\left\langle \mathbf{p}\mathbf{\overline{\mathbf{a}}},\overline{\bar{\d }f_{\Delta}^{I}(\mathbf{a},\overline{\mathbf{a}})}\right\rangle =0.
\end{equation}

Let us observe here that the proof of formula \eqref{eq:-6} holds
under the more general condition $d_{\mathbf{p}}<\delta$. 

Let now consider the situation \eqref{eq:0}. In this case the formula \eqref{eq:-6}
is true more directly,  since $\frac{\mathrm{d}f(\mathbf{z}(t), \overline{\mathbf{z}}(t))}{\mathrm{d}t}=0$ and after comparing this to \eqref{eq:compare}.

Next, multiplying \eqref{eq:-6} by $\overline{\mu}_{0}$ and taking the real part, we
get:
\begin{equation}\label{eq:innerprod}
\begin{array}{cc}
\mathrm{Re}\left\langle \mathbf{pa},\mu_{0}\overline{\mathrm{d}f_{\Delta}^{I}}(\mathbf{a},\overline{\mathbf{a}})\right\rangle +\mathrm{Re}\left\langle \mathbf{p}\mathbf{\overline{\mathbf{a}}},\mu_{0}\overline{\bar{\d }f_{\Delta}^{I}(\mathbf{a},\overline{\mathbf{a}})}\right\rangle \\
=\mathrm{Re}\left\langle \mathbf{p}\mathbf{a},\mu_{0}\overline{\mathrm{d}f_{\Delta}^{I}}(\mathbf{a},\overline{\mathbf{a}})+\overline{\mu}_{0}\bar{\d }f_{\Delta}^{I}(\mathbf{a},\overline{\mathbf{a}})\right\rangle =0.\end{array}
\end{equation}

On the other hand, from \eqref{eq:-5}, we have:
\[
\mathrm{Re}\left\langle \mathbf{p}\mathbf{a},\mu_{0}\overline{\mathrm{d}f_{\Delta}^{I}}(\mathbf{a},\overline{\mathbf{a}})+\overline{\mu}_{0}\bar{\d }f_{\Delta}^{I}(\mathbf{a},\overline{\mathbf{a}})\right\rangle =\sum_{i\in J}\lambda_{0}p\Vert a_{j}\Vert^{2}
\]
which is different from zero since $\lambda_{0}\neq0$, $p< 0$
and $a_{j}\neq0$. This contradicts formula \eqref{eq:innerprod}. We have therefore proved that
 $J=\emptyset$.

From \eqref{eq:-5} we obtain:
\begin{equation}\label{eq:-7}
\mu_{0}\overline{\mathrm{d}f_{\Delta}^{I}}(\mathbf{a},\overline{\mathbf{a}})+\overline{\mu}_{0}\bar{\d }f_{\Delta}^{I}(\mathbf{a},\overline{\mathbf{a}})=0.
\end{equation}

Let us observe that in case $\lambda(t)\equiv0$ we have $J=\emptyset$
and therefore we get directly \eqref{eq:-7}. 

What \eqref{eq:-7} tells us is 
that $\mathbf{a}$ is a singularity of $f_{\Delta}^{I}$.
 Set now $\mathbf{A=}(\mathbf{a},1,1,\ldots,1)$ with the
$i^{th}$ coordinate $z_{i}=1$ for $i\notin I$. Since $\Delta\subset\overline{\mathrm{supp(\mathit{f^{I}})}}$,
the restriction $f_{\Delta}$ does not depend on the variables
$z_{m+1},\ldots,z_{n}$ or their conjugates. Thus for any $i\in\{1,2,\ldots,n\}$, we
have $\frac{\partial f_{\Delta}}{\partial\overline{z}_{i}}(\mathbf{z}(t),\overline{\mathbf{z}}(t))=\frac{\partial f_{\Delta}^{I}}{\partial\overline{z}_{i}}(\mathbf{z}(t),\overline{\mathbf{z}}(t))$
and $\frac{\partial f_{\Delta}}{\partial z_{i}}(\mathbf{z}(t),\overline{\mathbf{z}}(t))=\frac{\partial f_{\Delta}^{I}}{\partial z_{i}}(\mathbf{z}(t),\overline{\mathbf{z}}(t))$.
By replacing $f_{\Delta}^{I}$ with $f_{\Delta}$ in \eqref{eq:-7},
we get that $\mathbf{A}\in\mathbb{C}^{*n}$ is a singularity of $f_{\Delta}$.

\noindent
We may now apply Lemma \ref{lem:Let--be} to $d_{\mathbf{p}}$ and $\Delta$.
We have the following two cases:

\medskip
\noindent
(I). If $d_{\mathbf{p}}<0$, then, by Lemma \ref{lem:Let--be}(a),
$\Delta$ is a face of $\Gamma^+(f^{I})$. Since
$\mathbf{A}\in\mathbb{C}^{*n}$ is a singularity of $f_{\Delta}$ and since we have $f_{\Delta}(\mathbf{A}, \overline{\mathbf{A}}) = 0$ by (\ref{eq:-3}) for $d_{\mathbf{p}}<0$, this contradicts the Newton non-degeneracy of $f$ (Definition \ref{def:The-mixed-polynomial})
assumed in the statement of Theorem \ref{t:atyp_Newton}.

\smallskip
\noindent
(II). Let $d_{\mathbf{p}}=0$. Then $c=f_{\Delta}^{I}(\mathbf{a},\overline{\mathbf{a}})=f_{\Delta}(\mathbf{A},\overline{\mathbf{A}})\in f_{\Delta}(\Sing f_{\Delta}\cap \bC^{*n})$. By Lemma \ref{lem:Let--be}(b), $\Delta$ is either a face of $\Gamma^+(f^{I})$ or satisfies the condition (ii) of Definition \ref{d:badface}. Note that these two conditions are exclusive, which fact follows immediately from the definitions.
Let us show that $\Delta$ is a bad face of $\overline{\mathrm{supp(\mathit{f})}}$.

Let $d$ denote the minimal value of the restriction of $l_{\mathbf{p}}$
to $\overline{\mathrm{supp}(f)}$. Since $\overline{\mathrm{supp(\mathit{f^{I}})}}=\overline{\mathrm{supp}(f)}\cap\mathbb{R}_{\ge 0}^{I}$,
we have $d\leq d_{\mathbf{p}}=0$. Let $H$ be the hyperplane defined by the
equation $\sum_{i=1}^{m}p_{i}x_{i}+q\sum_{i=m+1}^{n}x_{i}=0$, where$q>-d+1>0$. Then, for any $(x_{1},\ldots,x_{n})\in\overline{\mathrm{supp}(f)}
\setminus\overline{\mathrm{supp}(f^{I})}$,
the value of $\sum_{i=1}^{m}p_{i}x_{i}+q\sum_{i=m+1}^{n}x_{i}$ is
positive. We therefore get $\Delta=\overline{\mathrm{supp(\mathit{f^{I}})}}\cap H=\overline{\mathrm{supp}(f)}\cap H$.

If $\Delta$ does not satisfy condition (i)(a) of
Definition \ref{d:badface}, then we have $m=n$ and $p_{i}\leq0$
for all $1\leq i\leq n$. Since by hypothesis $f$ depends effectively on all variables, in particular on the variable $z_{1}$,  the value $d_{\mathbf{p}}$ must be negative, which is a contradiction to the above original assumption.

This ends our proof.\fin

\begin{remark}\label{rem:The-equality-}
The equality \eqref{eq:-7} is the key of
the above proof of Theorem \ref{t:atyp_Newton}(a). If $c=0$, then we have
two cases in situation \eqref{eq:-2}:

(1) If $d_{\mathbf{p}}=\ord_{t}(f(\mathbf{z}(t),\overline{\mathbf{z}}(t))$, then formula \eqref{eq:-7} might
be not true.

(2) If $d_{\mathbf{p}}<\ord_{t}(f(\mathbf{z}(t),\overline{\mathbf{z}}(t))$, then we get the same proof of formula
\eqref{eq:-7} as in Proof of (a) (see the remark after formula \eqref{eq:-6}).
\end{remark}
\begin{remark}\label{r:cnotzero}
Let 
 $\Sigma^\ity := \{ c\in \bC \mid f^{-1}(c) \cap M(f) \mbox{ is not bounded}\}$.
Under the hypotheses of Theorem \ref{t:atyp_Newton}, the above proof also shows that if $c\in \Sigma^\ity$ and $c\not= 0$ then $c$ is a critical value of $f_\Delta$, for some bad face $\Delta$.
Indeed, if the path $\bz(t)\subset M(f) \cap f^{-1}(c)$ is not bounded, then it must be included in the singular locus $\Sing f^{-1}(c)$  since the fibre $f^{-1}(c)$ is an algebraic set. (An alternate argument may be extracted from the last part of the proof of Proposition \ref{p:S_contained_in_K}). This shows the inclusion $\Sigma^\ity \subset S(f) \cap f(\Sing f)$. By Theorem \ref{t:atyp_Newton}(a) we then have $\Sigma^\ity \m \{ 0\} \subset \underset{\Delta\in\mathfrak{B}} \bigcup
f_{\Delta}(\Sing f_{\Delta})$.
\end{remark}


\subsection{Proof of Theorem \ref{t:atyp_Newton}(b).}


By absurd, let us suppose $f(\Sing f)$ is not bounded. Since $\Sing f$
is a semi-algebraic set, by Lemma \ref{l:curve-selection},
there exists  a real analytic path $\mathbf{z}(t)\subset\Sing f$
defined on a small enough interval $\left]0,\varepsilon\right[$ such
that: \[
\underset{t\rightarrow0}{\lim}\Vert\mathbf{z}(t)\Vert=\infty,\,\mathrm{and\,}\underset{t\rightarrow0}{\lim}\left|f(\mathbf{z}(t),\overline{\mathbf{z}}(t))\right|=\infty\]

We follow the proof of (a). Since $\mathbf{z}(t)\subset\Sing f$,  
we have $\lambda(t)\equiv0$ in \eqref{eq:again} and therefore we obtain \eqref{eq:-7} directly, as remarked after it. From $\underset{t\rightarrow0}{\lim}\left|f(\mathbf{z}(t),\overline{\mathbf{z}}(t))\right|=\infty$
it follows that $d_{\mathbf{p}}\leq\mathrm{ord_{t}}(f(\mathbf{z}(t),\overline{\mathbf{z}}(t))<0$.
We are in the situation of (I) from the proof of Theorem \ref{t:atyp_Newton}(a) but without being able to insure the equality $f_{\Delta}(\mathbf{A}, \overline{\mathbf{A}}) = 0$. That is why we  need here the Newton strong non-degeneracy in order to get a contradiction.

To prove that $f_{\Delta}(\Sing f_{\Delta})$ is bounded, for any bad face $\Delta\subset\overline{\mathrm{supp(\mathit{f})}}$, we use Remark \ref{r:Delta_strongly} and  the above proof for $f_\Delta$ in place of $f$.

Since $\overline{\mathrm{supp(\mathit{f})}}$ has finitely many faces and since, by Theorem \ref{t:atyp_Newton}(a), we have the inclusion $S(f)\subset\{0\}\cup\underset{\Delta\in\mathfrak{B}}{\cup}f_{\Delta}(\Sing f_{\Delta})$,
it follows that $S(f)$ is bounded. \fin



\bigskip
\subsection{Some consequences}
 We get some sharper statements for significant particular classes of non-degenerate mixed polynomials.  The following result extends the one for holomorphic polynomials proved in \cite{Ku}.
\begin{corollary}\label{c:convenient}
\noindent If $f$ is a mixed Newton non-degenerate and convenient
polynomial, then $S(f)=\emptyset$.
\end{corollary}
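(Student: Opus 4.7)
The strategy is to apply Theorem \ref{t:atyp_Newton}(a) together with two supplementary facts forced by convenience: first that the set $\mathfrak{B}$ of bad faces is empty, and second that $0 \notin S(f)$. Combining these with the inclusion from Theorem \ref{t:atyp_Newton}(a) will immediately give $S(f) = \emptyset$.

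To see that $\mathfrak{B} = \emptyset$, suppose a bad face $\Delta$ were cut out by a hyperplane $H\colon a_1 x_1 + \cdots + a_n x_n = 0$ with $a_i < 0 < a_j$ for some indices $i,j$. Then $H$ is a supporting hyperplane of $\overline{\supp f}$, so the linear form $\sum a_k x_k$ is single-signed on $\overline{\supp f}$. But convenience furnishes points $k_i e_i, k_j e_j \in \supp f$ on the $i$-th and $j$-th coordinate axes with $k_i, k_j > 0$, yielding $a_i k_i < 0$ and $a_j k_j > 0$; these straddle $H$, a contradiction.

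To see that $0 \notin S(f)$, suppose the contrary and run the opening of the proof of Theorem \ref{t:atyp_Newton}(a) with $c = 0$: we obtain paths $\bz(t) \subset M(f)$, $\lambda(t)$, $\mu(t)$ satisfying \eqref{eq:again}, an index set $I = \{i : z_i(t)\not\equiv 0\}$, exponents $\mathbf{p} = (p_i)_{i\in I}$ with $p = \min_{i\in I} p_i < 0$, and a maximal face $\Delta \subset \overline{\supp f^I}$ on which $l_\mathbf{p}$ attains its minimum $d_\mathbf{p}$. Convenience of $f$ descends to convenience of $f^I$, so $\overline{\supp f^I}$ contains points $k_i e_i$ for every $i\in I$; evaluating $l_\mathbf{p}$ at such a point for an index with $p_i = p$ yields $d_\mathbf{p} \le p \,k_i < 0$. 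Since $f(\bz(t),\overline{\bz}(t)) \to 0$, we have $\ord_t f(\bz(t),\overline{\bz}(t)) > 0 > d_\mathbf{p}$, so by case (2) of Remark \ref{rem:The-equality-} the key identity \eqref{eq:-7} still holds. The remainder of case (I) of the proof of Theorem \ref{t:atyp_Newton}(a) then applies verbatim and produces $\mathbf{A} \in \Sing f_\Delta \cap f_\Delta^{-1}(0) \cap \bC^{*n}$ on a face $\Delta$ of $\Gamma^+(f)$, contradicting Newton non-degeneracy.

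The only mildly delicate point is the interplay between convenience and the case $c = 0$: it is convenience that forces $d_\mathbf{p} < 0$, which simultaneously keeps us in case (I) of the earlier proof and places us in the safe case (2) of Remark \ref{rem:The-equality-} where \eqref{eq:-7} survives. This is where the work is; everything else is a direct reuse of machinery already built, and the conclusion $S(f)=\emptyset$ follows immediately.
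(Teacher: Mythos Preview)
Your proof is correct, and its core is exactly the paper's argument: convenience forces $d_\mathbf{p}<0$, which places us in case (I) with $f_\Delta(\mathbf{A})=0$ and yields a contradiction with non-degeneracy via Remark~\ref{rem:The-equality-}(2).

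The only difference is organizational. You split into three steps: invoke Theorem~\ref{t:atyp_Newton}(a), show $\mathfrak{B}=\emptyset$, then separately treat $c=0$. The paper observes that your third step already handles \emph{every} $c\in S(f)$ at once (since $d_\mathbf{p}<0\le\ord_t f(\mathbf{z}(t),\overline{\mathbf{z}}(t))$ whenever the limit $c$ is finite), so it never needs to cite Theorem~\ref{t:atyp_Newton}(a) as a black box nor argue that there are no bad faces. Your steps~1 and~2 are correct but redundant: the machinery you re-enter in step~3 is self-contained and does all the work. That said, your observation that convenience kills bad faces is a pleasant geometric fact in its own right, even if it is not strictly needed here.
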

\begin{proof}
Under the same notations and definitions as in the proof of Theorem
\ref{t:atyp_Newton}(a), since $l_{\mathbf{p}}(x)=\sum_{i=1}^{m}p_{i}x_{i}$
has at least a coefficient $p_{j}<0$ for some $j$ and the intersection
of $\mathrm{supp}\left(f\right)$ with each positive coordinate axis
is non-empty, the value of $l_{\mathbf{p}}(x)$ at a point of the
intersection of $\mathrm{supp}\left(f\right)$ with the $j$-axis
is negative. This implies that the minimal value $d_{\mathbf{p}}$
is negative. By Lemma \ref{lem:Let--be}(a), $\Delta$ is a face
of $\Gamma^+(f)$.

Since we have here $d_{\mathbf{p}}<\ord_{t}(f(\mathbf{z}(t),\overline{\mathbf{z}}(t))$, by using Remark
\ref{rem:The-equality-}, we get formula \eqref{eq:-7} and 
a singularity $\mathbf{A}\in\bC^{*n}$ of $f_{\Delta}$ with $f_{\Delta}(\mathbf{A}) =0$ as in (I) above. This contradicts the Newton non-degeneracy of $f$. 
\end{proof}

\begin{definition} \label{d:weighted}
A mixed polynomial $f$ is called \emph{(radial)
weighted-homogeneous} if there exist positive integers $q_{1},\cdots,q_{n}$
with $\gcd(q_{1},\cdots,q_{n})=1$ and a positive integer $m$ such
that $\sum_{j=1}^{n}q_{j}(\nu_{j}+\mu_{j})=m$, or, equivalently, such that  $f(t\circ\mathbf{z})=t^{m}f(\mathbf{z},\overline{\mathbf{z}})$ for any $t\in\mathbb{R}^{*}$, where $t\circ\mathbf{z} :=(t^{q_{1}}z_{1},\ldots,t^{q_{n}}z_{n})$.
\end{definition}

\begin{corollary} 
Let $f$ be a mixed polynomial, weighted-homogeneous
 and Newton strongly non-degenerate. Then: 
\begin{enumerate}
\item $\Sing f\cap\mathbb{C}^{*n}=\emptyset$, 
\item $S(f)\cup f(\Sing f) \subset\left\{ 0\right\}$. 
\end{enumerate}
\end{corollary}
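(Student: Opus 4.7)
The plan rests on two structural observations peculiar to the weighted-homogeneous setting. First, since every monomial $\bz^\nu \bar\bz^\mu$ appearing in $f$ satisfies $\sum_j q_j(\nu_j + \mu_j) = m > 0$, the support $\supp(f)$ lies in the affine hyperplane $H_m = \{x \in \bR^n \mid \sum_j q_j x_j = m\}$, which does not meet the origin. Consequently $\Gamma_{0}(f)$ is the cone with apex $0$ over $\overline{\supp(f)}$, so $\overline{\supp(f)}$ is itself a face of $\Gamma^+(f)$ and every face of $\overline{\supp(f)}$ --- in particular every bad face in the sense of Definition \ref{d:badface} --- is already a face of $\Gamma^+(f)$. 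Second, $f(0) = 0$ is forced by $m > 0$, so Theorem \ref{t:atyp_Newton} is available. For part (1) I would apply Newton strong non-degeneracy to the top face $\Delta_0 = \overline{\supp(f)}$, on which $f_{\Delta_0} = f$: this yields $\Sing f \cap \bC^{*n} = \Sing f_{\Delta_0} \cap \bC^{*n} = \emptyset$.

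For $S(f) \subset \{0\}$ in part (2) I would invoke Theorem \ref{t:atyp_Newton}(a) directly. By the observation above, every $\Delta \in \mathfrak{B}$ is a face of $\Gamma^+(f)$, so Newton strong non-degeneracy gives $\Sing f_\Delta \cap \bC^{*n} = \emptyset$ for each such $\Delta$. The union on the right-hand side of Theorem \ref{t:atyp_Newton}(a) is therefore empty, and $S(f) \subset \{0\}$.

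For $f(\Sing f) \subset \{0\}$ I would argue by contradiction: suppose $\bz_0 \in \Sing f$ with $c := f(\bz_0, \overline{\bz_0}) \neq 0$. A short check shows that $\Sing f$ is invariant under the $\bR^*$-action $\bz \mapsto t \circ \bz$: differentiating the weighted-homogeneity identity makes $\frac{\partial f}{\partial z_j}$ and $\frac{\partial f}{\partial \bar z_j}$ weighted-homogeneous of the same degree $m - q_j$, so the defining equation $\overline{\d f}(\bz) = \mu\,\bar\d f(\bz)$ of Lemma \ref{pro:-is-a} is preserved along the orbit, with the same unit $\mu$. Hence $t \circ \bz_0 \in \Sing f$ for every $t \in \bR^*$, and $f(t \circ \bz_0) = t^m c$. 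Since $m > 0$, the set $\{t^m c \mid t \in \bR^*\}$ is unbounded, contradicting Theorem \ref{t:atyp_Newton}(b), which guarantees that $f(\Sing f)$ is bounded.

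The only steps that are not purely formal are the geometric fact that every face of $\overline{\supp(f)}$ --- hence every bad face --- is a face of $\Gamma^+(f)$, and the parallel $\bR^*$-invariance of $\Sing f$. Both reduce to routine verifications using the cone structure of $\Gamma_0(f)$ and the weighted-homogeneity of the mixed partial derivatives. Once these are in hand, the conclusion is immediate from parts (a) and (b) of Theorem \ref{t:atyp_Newton}.
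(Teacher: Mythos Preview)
Your proof is correct and follows the same overall strategy as the paper's: part~(1) via strong non-degeneracy applied to the top face $\overline{\supp(f)}$, the inclusion $S(f)\subset\{0\}$ via Theorem~\ref{t:atyp_Newton}(a), and $f(\Sing f)\subset\{0\}$ via the $\bR^*$-invariance of $\Sing f$ combined with Theorem~\ref{t:atyp_Newton}(b). The one noteworthy difference is in the handling of bad faces. The paper simply asserts that $\overline{\supp(f)}$ has no bad face, whereas you argue more carefully that every face of $\overline{\supp(f)}$---hence in particular every bad face---is already a face of $\Gamma^+(f)$, so strong non-degeneracy forces $\Sing f_\Delta\cap\bC^{*n}=\emptyset$ for each $\Delta\in\mathfrak{B}$, making the union in Theorem~\ref{t:atyp_Newton}(a) empty. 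Your version is in fact the safer one: bad faces in the sense of Definition~\ref{d:badface} \emph{can} exist for weighted-homogeneous $f$ (for instance the vertex $(2,1,0)$ of $\overline{\supp(z_1^2z_2+z_1z_2^2+z_3^3)}$ is bad, being cut out by the hyperplane $x_1-2x_2-x_3=0$), but they never contribute to the right-hand side of Theorem~\ref{t:atyp_Newton}(a) precisely because, as you note, they are faces of $\Gamma^+(f)$.
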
 
\begin{proof}
Since $f$ is weighted-homogeneous, let's say of degree $m$, we have $f(0)=0$ and $\overline{\supp(f)}$
is contained in a single hyperplane which does not pass through the origin.
Therefore the Newton boundary $\Gamma^{+}(f)$ has a single maximal face and
its non-degeneracy implies $\Sing f\cap\mathbb{C}^{*n}=\emptyset$.
Since $\overline{\supp(f)}$ has no bad face and since by Theorem
\ref{t:atyp_Newton}(a) we have $S(f)\subset\{0\}\cup\underset{\Delta\in\mathfrak{B}}{\cup}f_{\Delta}(\Sing f_{\Delta})$,
it follows that $S(f)\subset\{0\}$. 

By absurd, let us suppose that $c\in f(\Sing f)\cap\mathbb{C}^{*}$.
For any $\mathbf{z}\in\Sing f$ such that $f(\mathbf{z},\overline{\mathbf{z}})=c$,
there exists $\lambda\in S_{1}^{1}$ such that $\overline{\mathrm{d}f}(\mathbf{z},\overline{\mathbf{z}})=\lambda\overline{\d}f(\mathbf{z},    \overline{\mathbf{z}})$.
Multiplying  by $t^{m-q_{i}}$ the equalities $\overline{\frac{\partial f}{\partial z_{i}}}(\mathbf{z},\overline{\mathbf{z}})=\lambda\frac{\partial f}{\partial\overline{z}_{i}}(\mathbf{z},\overline{\mathbf{z}})$
for $i=1,2\ldots,n$, and using that $f$ is weighted-homogeneous, we get that $\overline{\mathrm{d}f}(t\circ\mathbf{z},t\circ\overline{\mathbf{z}})=\lambda\overline{\d}f(t\circ\mathbf{z},t\circ\overline{\mathbf{z}})$.
 This implies that $t\circ\mathbf{z}\in\Sing f$ and $t^{m}c\in f(\Sing f)$, therefore
 $f(\Sing f)$ is not bounded, which contradicts Theorem \ref{t:atyp_Newton}(b). This proves that $f(\Sing f)\subset\{0\}$. 
\end{proof}

\subsection{Examples}\label{ss:examples}

\begin{example}\label{ex:2} 
The polynomial $f :\bC^2 \to \bC$, $f=z_{1}+z_{2}+\overline{z}_{1}^{2}+\overline{z}_{2}^{2}$, 
 is Newton strongly non-degenerate and convenient. By direct computation of $M(f)$ we obtain that $S(f)=\emptyset$, as predicted by Corollary \ref{c:convenient}, and  $f(\Sing f)= \{ a+\frac{1}{2}\overline{a}^{2} \mid a\in S^{1}\}$,  a closed cuspidal curve, which agrees with Theorem \ref{t:atyp_Newton}(b).
\end{example}

\begin{example}\label{e:counterex}
N\' emethi
and Zaharia have proved in \cite[Proposition 6]{NZ1}
that if the holomorphic polynomial $f:\mathbb{C}^{2}\rightarrow\mathbb{C}$
is Newton non-degenerate,  not convenient, not depending of just one variable and with $f(0) = 0$, then one has the equality $B(f)=f(\Sing f)\cup\{0\}\cup\underset{\Delta\in\mathfrak{B}}{\cup}f_{\Delta}(\Sing f_{\Delta})$, and in particular $0\in B(f)$.

Let us show that this is no more true  for mixed polynomials, by using the example $f : \bR^4 \to \bR^2$, 
$f(z_{1},z_{2})=z_{1}(1+\left|z_{2}\right|^{2}+z_{1}z_{2}^{4})$. This is Newton strongly non-degenerate,  not convenient and $f(0) = 0$. Standard computations yield that $f(\Sing f) = \emptyset$, hence $0\notin f(\Sing f)$. It is more tedious to show that $0\notin K_\ity(f)$ by using an argument based on the curve selection lemma \ref{l:curve-selection}. Then use Proposition \ref{p:S_contained_in_K} to get $0\notin S(f)$ and Theorem \ref{t:fib} to conclude $0\notin B(f)$.

Finally, let us compute explicitly $B(f)$. The above named computations also show the equalities $K_\ity(f) = S(f) = \{ c\in \bC \mid |c| = 1/4 \}$.  Let us then take some $c$ with $|c| > 1/4$. Using Theorem \ref{t:monodro_family} which will be proved in the next section, i.e., the stability of the monodromy at infinity in certain families,  we get the homeomorphism $f^{-1}(c) \simeq g^{-1}(c)$, where $g = z_1(1+ z_1z_2^4)$. By a direct computation we get the homotopy equivalence $g^{-1}(c)\simeq \vee_4 S^1$.  If one takes some $c$ with $|c| < 1/4$, then a similar computation shows that $f^{-1}(c)$ is homotopy equivalent to $\bC \sqcup \bC^*$. Together with Theorem \ref{t:fib}, this shows $B(f) = S(f) = \{ c\in \bC \mid |c| = 1/4 \}$.

\end{example}

\begin{example}\label{ex:1}
Let
 $f :\bR^4 \to \bR^2$, $f =z_{1}z_{2}+\overline{z}_{1}^{2}\overline{z}_{2}^{2}$.
This is a Newton strongly non-degenerate mixed polynomial, where $\Gamma^+(f) = (2,2)$ and $\overline{\supp (f)}$ consists of just one face $\Delta$ which is a bad face. 
The solution of $\overline{\mathrm{d}f}(\mathbf{z},\overline{\mathbf{z}})=\lambda\overline{\d}f(\mathbf{z},\overline{\mathbf{z}})$ for $\lambda\in S_1^1$, 
is the set $\{z_{1}z_{2}=\frac{1}{2\overline{\lambda}}\} \cup \{z_{1}=z_{2}=0\}$.
We obtain $f(\Sing f_\Delta)=f(\Sing f)=\{0\}\cup\{\frac{1}{2\overline{\lambda}}+\frac{1}{4\lambda^{2}}\mid\lambda\in S_{1}^{1}\}$.
By taking $z_{1}z_{2}=\frac{1}{2\overline{\lambda}}$ with $z_{1}\rightarrow0$, hence
 $z_{2}\rightarrow\infty$, by straightforward computations we get $f(\Sing f)\setminus\{0\}\subset S(f)$ and $\{0\} \not \in S(f)$.
On the other hand, for $\{\mathbf{z}^{k}\}_{k\in\mathbb{N}}\subset M(f)\setminus\Sing f$
such that $\underset{k\rightarrow\infty}{\lim}\Vert\mathbf{z}^{k}\Vert=\infty$, we get 
 $\left|f(\mathbf{z}^{k})\right|\rightarrow\infty$. This shows that $S(f) \m f(\Sing(f)) = \emptyset$, by using Theorem \ref{t:atyp_Newton}(b) too.
 Moreover, it shows
that the inclusion of Theorem \ref{t:atyp_Newton}(a) may be strict.

One may also compute explicitly the topology of the fibres with the method described in the preceding example. Let us note that the complement of $S(f)$, which is a simple closed plane curve containing the origin,  has two connected components.  The fibre of $f$ over some point of the exterior of this curve is homotopy equivalent to $\bC^* \sqcup \bC^*$ (by using a deformation from $f$  to $g := \overline{z}_{1}^{2}\overline{z}_{2}^{2}$ and then Theorem  \ref{t:monodro_family}).
One computes directly that a fibre over some interior point, different from the origin, is homotopy equivalent to the disjoint union of four $\bC^*$ and that the fibre over the origin is
 homotopy equivalent to the disjoint union of three $\bC^*$ and one point. In particular these computations, showing the change of topology of fibres, provide explicitly the bifurcation set  $B(f) = \{ 0\} \cup \{\frac{1}{2\overline{\lambda}}+\frac{1}{4\lambda^{2}}\mid\lambda\in S_{1}^{1}\}$.
\end{example}

\begin{example}\label{e:not_strongly}
The following is an example of a  Newton non-degenerate, not strongly non-degenerate, mixed polynomial. It also shows that bad faces which are not strictly bad may contribute to the bifurcation set $B(f)$, a phenomenon which does occur for  holomorphic functions (compare Theorem \ref{t:atyp_Newton} to Némethi-Zaharia statement \cite{NZ1}, cf \eqref{eq:nemethizaharia}). Let  $f:\mathbb{C}^{2}\rightarrow\mathbb{C}$,
$f=\left|z_{1}\right|^{2}(z_{2}^{2}+2z_{2}\overline{z}_{2}+1)$.

The support $\overline{\mathrm{supp}(f)}$ has three faces, all of which are contained in $\Gamma^{+}(f$), and the restrictions of $f$ look as follows: $f_{\triangle_{1}}=\left|z_{1}\right|^{2}(z_{2}^{2}+2z_{2}\overline{z}_{2})$, $f_{\triangle_{2}}=\left|z_{1}\right|^{2}$ and $f_{\triangle_{3}}=f$.
We observe that $\left\{ f_{\triangle_{i}}=0\right\} \cap\mathbb{C}^{*2}=\emptyset$, for $i=1,2,3$,  so $f$ is non-degenerate. However, $f_{\triangle_{2}}$ is not strongly non-degenerate.
 There is a single bad face $\triangle_{1}$ and it is not strictly bad.

By Theorem \ref{t:atyp_Newton}(a) we have  $S(f)\subset \left\{ 0\right\}\cup f_{\triangle_{1}}(\Sing f_{\triangle_{1}}\cap\mathbb{C}^{*2})$, and by straightforward computations we get $f(\Sing f)= \mathbb{R}_{\ge 0}$ and $f_{\triangle_{1}}(\Sing f_{\triangle_{1}}\cap\mathbb{C}^{*2})=\{ (\frac{3}{2}\pm\frac{\sqrt{3}}{2}i)t \mid t>0\}$.

Let us show that the inclusion  $B(f)\subset S(f)\cup f(\Sing f)$ of the Fibration Theorem \ref{t:fib} is an equality. Assuming that $f(\Sing f)\subset B(f)$ by definition, it remains to prove the inclusion $S(f)\subset B(f)$.
We fix some $t>0$ and consider a small disk neighborhood $U$ of the point $(\frac{3}{2}+\frac{\sqrt{3}}{2}i)t$. Let $\left|z_{1}\right|^{2}=c\neq0$, $z_{2}=x+iy$. The equality $f=a+ib$ yields: $c(3x^{2}+y^{2}+1) = a$, $-2cxy  =b$, and by combing these two equations we obtain:
\begin{equation}\label{eq:exmpl}
3bx^{2}+2axy+b(y^{2}+1)=0.
\end{equation}
Solving in $x$, the discriminant $4a^{2}y^{2}-12b^{2}(y^{2}+1)$ shows that
\eqref{eq:exmpl} has no solution if and
only if $a^{2}<3b^{2}$. This implies that the fibres of $f$ are empty over one half of the disk $U$ and non-empty over the other half, so there is no locally trivial fibration at $(\frac{3}{2}+\frac{\sqrt{3}}{2}i)t$. The same proof applies to  $(\frac{3}{2}-\frac{\sqrt{3}}{2}i)t$.
Altogether these yield the claimed inclusion $S(f)\subset B(f)$. 
\end{example}


\section{Families of mixed polynomials and stability of the monodromy at infinity}\label{s:monodromy}

As a consequence of Theorems \ref{t:fib} and \ref{t:atyp_Newton}(b), the class of Newton strongly non-degenerate polynomials $f$  has the property that $B(f) $ is bounded.  One has the following general definition.

\begin{definition}\label{d:monodromy_infty} \textbf{(Monodromy at infinity)}\\
Let $f : \bR^{2n} \to \bR^2$ be a real polynomial map and assume that the bifurcation set $B(f)$ is bounded. Let $\delta_0 >0$ such that $B(f)$ is included in the open disk $D_{\delta_0}$ of radius  $\delta_0$ centered at $0\in \bC$. We call \textit{monodromy (fibration) at infinity} the fibration:
 \[
f_{\mid}:f^{-1}(S_{\delta}^{1})\rightarrow S_{\delta}^{1}.\]
over some circle $S_{\delta}^{1}$ of radius $\delta$ which, by the Fibration Theorem \ref{t:fib}, exists and is independent of $\delta \ge \delta_0$.
\end{definition}

We then prove the following result:

\begin{theorem}\label{t:monodro_family}
Let $F_{s}(\mathbf{z},\overline{\mathbf{z}}):=F(\mathbf{z},\overline{\mathbf{z}},s) : \bR^{2n} \to \bR^2$
be a family of Newton strongly non-degenerate polynomials depending
analytically of a parameter $s$, where $s\in\left[0,1\right]$. If  the Newton boundary $\Gamma^{+}(F_{s})$
is constant in this family, then the monodromy at infinity is
stable\footnote{Here, ``stable'' means that the monodromy fibrations at infinity are equivalent in
this family.}.
\end{theorem}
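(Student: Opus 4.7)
The plan is to construct a single locally trivial fibration over $S^1_\delta\times[0,1]$, for some $\delta$ independent of $s$, whose restriction to each slice $S^1_\delta\times\{s\}$ is the monodromy at infinity of $F_s$. Since $[0,1]$ is contractible, all slice fibrations will then be isotopic, which is exactly the claimed stability.

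The first and main step is a uniform estimate: there exist $\delta_0>0$ and $R_0>0$ such that for every $s\in[0,1]$ and every $\mathbf{z}\in\mathbb{C}^n$ with $\|\mathbf{z}\|\geq R_0$ and $|F_s(\mathbf{z})|\geq\delta_0$, one has $\mathbf{z}\notin M(F_s)\cup\Sing F_s$. I would argue by contradiction: assuming the failure of this statement and using compactness of $[0,1]$ to extract a convergent subsequence $s_k\to s_0\in[0,1]$, apply the Curve Selection Lemma \ref{l:curve-selection} to the semi-algebraic set $\{(\mathbf{z},s)\in\mathbb{C}^n\times[0,1]\mid \mathbf{z}\in M(F_s)\cup\Sing F_s\}$ to produce a real analytic arc $(\mathbf{z}(t),s(t))$ with $s(t)\to s_0$, $\|\mathbf{z}(t)\|\to\infty$ and $|F_{s(t)}(\mathbf{z}(t))|\to c_0$, where $|c_0|\geq\delta_0$ (possibly $c_0=\infty$). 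Since the coefficients of $F_s$ are analytic in $s$, only the limit value $s_0$ enters the leading $t$-coefficient of $F_{s(t)}(\mathbf{z}(t))$, and since $\Gamma^+(F_s)\equiv\Gamma^+$ is constant the leading face $\Delta$ belongs to this fixed polyhedron; the expansion argument of the proof of Theorem \ref{t:atyp_Newton} then applies verbatim and produces a point $\mathbf{A}\in\Sing (F_{s_0})_\Delta\cap\mathbb{C}^{*n}$, contradicting the Newton strong non-degeneracy of $F_{s_0}$.

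The second step is a family version of the proof of the Fibration Theorem \ref{t:fib}. Fix $\delta>\delta_0$, set $\tilde F(\mathbf{z},s):=(F_s(\mathbf{z}),s)$ and $E:=\tilde F^{-1}(S^1_\delta\times[0,1])$. By the uniform estimate, $\tilde F|_E$ is a submersion and $E\setminus(\bar B_{R_0}^{2n}\times[0,1])$ contains no point of $M(F_s)\times\{s\}$ for any $s$. On the proper piece $E\cap(\bar B_{R_0}^{2n}\times[0,1])$ Ehresmann's theorem yields a locally trivial fibration onto $S^1_\delta\times[0,1]$; on the unbounded piece the map $(\tilde F,\rho)$ is a proper submersion onto $S^1_\delta\times[0,1]\times[R_0,\infty[$, hence locally trivial by Ehresmann, and composing with the projection that forgets $\rho$ trivializes $\tilde F$ over $S^1_\delta\times[0,1]$. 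Gluing these two trivializations exactly as at the end of the proof of Theorem \ref{t:fib} produces the desired locally trivial fibration $\tilde F|_E:E\to S^1_\delta\times[0,1]$, and stability of the monodromy follows from the contractibility of $[0,1]$.

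The principal obstacle is the uniform estimate of the first step: the pointwise boundedness of $S(F_s)\cup F_s(\Sing F_s)$ for each individual $s$ is given by Theorem \ref{t:atyp_Newton}(b), but one needs the bound to be uniform in $s\in[0,1]$. The essential ingredients making this possible are a family version of the Curve Selection Lemma, the constancy of $\Gamma^+$ which fixes the combinatorics along the whole arc, and the strong (rather than merely generic) Newton non-degeneracy at every $s_0\in[0,1]$, which together let the single-parameter argument of Theorem \ref{t:atyp_Newton} be transported to the family.
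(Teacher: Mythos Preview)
Your overall strategy---build a single locally trivial fibration of $\tilde F$ over $S^1_\delta\times[0,1]$ via a uniform curve-selection argument, then invoke Ehresmann---is exactly the paper's approach. The paper organizes it as two preparatory propositions (Propositions~\ref{p:family_bounded} and~\ref{p:r_0}) followed by a short Ehresmann argument on the compact piece $\tilde F^{-1}(S_r^1\times I)\cap(B_R\times I)$; your Step~2 differs only cosmetically in that you glue the compact and unbounded pieces as in Theorem~\ref{t:fib} rather than first reducing to the compact one.

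There is, however, a real gap in your Step~1. You assert that ``the leading face $\Delta$ belongs to this fixed polyhedron'' $\Gamma^+$, and hence that $\mathbf{A}\in\Sing(F_{s_0})_\Delta\cap\mathbb{C}^{*n}$ contradicts strong non-degeneracy. But the face $\Delta$ produced by the expansion argument of Theorem~\ref{t:atyp_Newton} is a face of $\overline{\supp(F_{s_0}^I)}$, not a priori of $\Gamma^+$. When the limit $c_0$ is finite and nonzero one has $d_{\mathbf p}\le 0$; if $d_{\mathbf p}=0$ then by Lemma~\ref{lem:Let--be}(b) the face $\Delta$ may be a \emph{bad} face satisfying condition~(ii) of Definition~\ref{d:badface} rather than a face of $\Gamma^+$, and strong non-degeneracy says nothing about $\Sing(F_{s_0})_\Delta\cap\mathbb{C}^{*n}$ for such $\Delta$. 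What one actually obtains in that case is $c_0\in F_{s_0,\Delta}(\Sing F_{s_0,\Delta}\cap\mathbb{C}^{*n})$, which is not yet a contradiction.

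The paper closes this gap by first proving (Proposition~\ref{p:family_bounded}) that the union $\bigcup_{s}\bigcup_{\Delta\in\mathfrak B_s} F_{s,\Delta}(\Sing F_{s,\Delta}\cap\mathbb{C}^{*n})$ is itself bounded---this requires a second pass of the same curve-selection argument applied to the family $F_{s,\Delta}$, using that $\Gamma^+(F_{s,\Delta})\subset\Gamma^+$ is constant in $s$ and that $F_{s,\Delta}$ inherits strong non-degeneracy (Remark~\ref{r:Delta_strongly}). Only after this does one choose $\delta_0$ (the paper's $r_0$) larger than that bound, so that the bad-face alternative is ruled out and the contradiction goes through. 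Your sketch needs this extra layer; once added, your argument and the paper's coincide.
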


 In the holomorphic setting such a result was proved first by N\' emethi and Zaharia in the convenient case  \cite[Theorem 17]{NZ2}, then extended  by Pham \cite{Ph} to non-convenient. In our mixed setting,  the technique developed for the proof of Theorem \ref{t:atyp_Newton}  enables us to pursue the extension of these results for families of mixed polynomials, along the pattern of \cite{NZ2} and \cite[Lemmas 3.2--3.5]{Ph}. 
Let us point out that in the holomorphic case the Newton non-degeneracy is
a Zariski open dense and connected condition, hence there exists
a family of Newton non-degenerate polynomials with the same Newton
boundary at infinity joining any two such polynomials. However, in the mixed case,
 we have shown in Remark \ref{r:strongly-nondeg} that the Newton strongly non-degenerate
condition is neither dense, nor connected, but it is still an open condition (see \S \ref{ss:open}). 
Therefore, in order to obtain a stability theorem in the mixed case, one has to work with a given
family of mixed polynomials.

\begin{example}\label{e:B_twisted}
 Let us consider a family of \textit{twisted Brieskorn mixed
polynomials\footnote{terminology used by Oka \cite{Oka1}}}:\\ $F_{s}(\mathbf{z},\overline{\mathbf{z}})=\sum_{i=1}^{n}z_{i}^{a_{i}+b_{i}}\overline{z}_{i}^{b_{i}}+s\sum_{i=1}^{n}z_i^{a_{i}+2b_{i}}$,
where $a_{i},b_{i}\in\mathbb{N}^{+}$ for $1\leq i\leq n$ and $0\leq s<\underset{1\leq i\leq n}{\min}\frac{a_{i}}{a_{i}+2b_{i}}$.
 It turns out by an easy computation that $F_{s}(\mathbf{z},\overline{\mathbf{z}})$
is a family of Newton strongly non-degenerate polynomials. Thus, by our Theorem \ref{t:monodro_family}, the
monodromy at infinity of $F_s$ is isotopic to that of  $F_{0}(\mathbf{z},\overline{\mathbf{z}})= \sum_{i=1}^{n}z_{i}^{a_{i} + b_{i}}\overline{z}_{i}^{b_{i}}$.
\end{example}

\smallskip

For the proof of the theorem, we need some preliminaries. Let $F_{s}$ stand for $F(\mathbf{z},\overline{\mathbf{z}},s)$, let
$F(\Sing F) :=\underset{s\in\left[0,1\right]}{\cup}F_{s}(\Sing F_{s})$, $S(F) :=\underset{s\in\left[0,1\right]}{\cup}S(F_{s})$. We also consider the restriction $F_{s,\Delta}$ of $F_s$ to some face $\Delta$ of $\overline{\supp F_s}$
and write $F_{\Delta}(\mathbf{z},\overline{\mathbf{z}},s) :=F_{s,\Delta}$.

\begin{proposition}\label{p:family_bounded}
Under the assumption of Theorem \ref{t:monodro_family}, the set
$F(\Sing F)\cup S(F)$ is bounded.
\end{proposition}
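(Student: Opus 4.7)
The plan is to argue by contradiction, adapting the technique of Theorem \ref{t:atyp_Newton}(b) to the parameter-dependent setting. Suppose $F(\Sing F) \cup S(F)$ is unbounded. Since $\Sing F_s \subset M(F_s)$ for every $s$, a diagonal extraction argument analogous to the one used in the proof of Proposition \ref{p:closed} produces a sequence $(\mathbf{z}_k, s_k) \in \mathbb{C}^n \times [0,1]$ with $\mathbf{z}_k \in M(F_{s_k})$, $\|\mathbf{z}_k\| \to \infty$ and $|F(\mathbf{z}_k, \overline{\mathbf{z}}_k, s_k)| \to \infty$; compactness of $[0,1]$ lets me pass to a subsequence along which $s_k \to s_0 \in [0,1]$. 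Applying the Curve Selection Lemma at infinity (Lemma \ref{l:curve-selection}) to the semi-analytic subset of $\mathbb{C}^n \times [0,1] \times \mathbb{R} \times \mathbb{C}^*$ cut out by the Milnor equation from Definition \ref{d:milnorset} yields real analytic paths $\mathbf{z}(t), s(t), \lambda(t), \mu(t)$ on $]0, \varepsilon[$ with $\mathbf{z}(t) \in M(F_{s(t)})$, $\|\mathbf{z}(t)\| \to \infty$, $|F(\mathbf{z}(t), \overline{\mathbf{z}}(t), s(t))| \to \infty$, and $s(t) \to s_0$ as $t \to 0$.

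Next I mimic the proof of Theorem \ref{t:atyp_Newton}(b) treating $s(t)$ as an additional analytic parameter. Writing $z_i(t) = a_i t^{p_i} + \mathrm{h.o.t.}$ for $i \in I := \{i : z_i(t) \not\equiv 0\}$, $s(t) = s_0 + \mathrm{h.o.t.}$, and $\mu(t) = \mu_0 t^l + \mathrm{h.o.t.}$, I let $\Delta$ be the maximal face of $\overline{\mathrm{supp}(F_{s_0}^I)}$ on which $l_\mathbf{p} = \sum_{i \in I} p_i x_i$ attains its minimum $d_\mathbf{p}$. Because $\Gamma^+(F_s) = \Gamma^+$ is constant along the family, the restriction $F_\Delta(\mathbf{z}, \overline{\mathbf{z}}, s)$ is a well-defined polynomial family whose coefficients depend analytically on $s$ and which specializes at $s = s_0$ to $F_{s_0, \Delta}$. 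The hypothesis $|F(\mathbf{z}(t), \overline{\mathbf{z}}(t), s(t))| \to \infty$ forces $d_\mathbf{p} < 0$, so Lemma \ref{lem:Let--be}(a) places $\Delta$ in $\Gamma^+(F_{s_0}^I) \subset \Gamma^+(F_{s_0})$. Comparing leading coefficients in $t$ of the Milnor equation exactly as in the derivation of (4.7) of the excerpt, and then specializing at $t = 0$, I obtain
\[
\mu_0 \overline{\mathrm{d}F_{s_0,\Delta}^I}(\mathbf{a}, \overline{\mathbf{a}}) + \overline{\mu}_0 \overline{\d }F_{s_0,\Delta}^I(\mathbf{a}, \overline{\mathbf{a}}) = 0.
\]
Extending $\mathbf{a}$ by ones to $\mathbf{A} \in \mathbb{C}^{*n}$ and invoking Lemma \ref{pro:-is-a}, I conclude that $\mathbf{A}$ is a critical point of $F_{s_0, \Delta}$ lying in $\mathbb{C}^{*n}$, contradicting the Newton strong non-degeneracy of $F_{s_0}$ at the face $\Delta$ of $\Gamma^+(F_{s_0})$.

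The main technical obstacle is controlling the effect of the extra varying parameter on the leading-order expansions. Concretely, I must verify that the leading terms in $t$ of $F(\mathbf{z}(t), \overline{\mathbf{z}}(t), s(t))$ and of the partial derivatives $\partial F/\partial z_i$, $\partial F/\partial \overline{z}_i$ evaluated along the curve are still governed by $F_{s_0, \Delta}$ and its first-order mixed derivatives at $\mathbf{a}$, even though the coefficients of $F_{s(t)}$ themselves depend analytically on $t$ through $s(t)$. Since $s(t) \to s_0$, each coefficient of a monomial of $F_{s(t)}$ admits an analytic expansion in $t$ whose constant term is the corresponding coefficient of $F_{s_0}$, and the combinatorial constancy of $\Gamma^+$ guarantees that no monomial lying on the Newton boundary appears or vanishes along the family; hence the leading-order analysis reduces to the one in the proof of Theorem \ref{t:atyp_Newton}(b), along the pattern of \cite{NZ2} and \cite[Lemmas 3.2--3.5]{Ph} in the holomorphic case.
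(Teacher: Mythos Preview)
Your reduction of both cases to a single analytic curve $\mathbf{z}(t)\in M(F_{s(t)})$ with $|F(\mathbf{z}(t),\overline{\mathbf{z}}(t),s(t))|\to\infty$ is where the argument breaks down. The derivation of formula \eqref{eq:-7} in the proof of Theorem~\ref{t:atyp_Newton} works under one of two hypotheses: either $\lambda(t)\equiv 0$ (the path lies in $\Sing F_{s(t)}$, which is what Theorem~\ref{t:atyp_Newton}(b) uses), or $d_{\mathbf p}<\ord_t F$ (which is what Theorem~\ref{t:atyp_Newton}(a) uses, and which holds there because the limiting value $c$ is finite, forcing $\ord_t F\ge 0\ge d_{\mathbf p}$). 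In your setup neither is available: your path is only in $M(F_{s(t)})$, so $\lambda(t)$ need not vanish; and since $|F|\to\infty$ you only know $d_{\mathbf p}\le \ord_t F<0$, and equality $d_{\mathbf p}=\ord_t F$ (equivalently $F^I_{s_0,\Delta}(\mathbf a)\ne 0$) is the generic situation. In that case the vanishing relation \eqref{eq:-6} fails---it is replaced by a nonzero right-hand side coming from the Euler relation on $\Delta$---and the contradiction in \eqref{eq:innerprod} that forces $J=\emptyset$ no longer holds. So you cannot conclude \eqref{eq:-7}, and hence cannot produce a singular point of $F_{s_0,\Delta}$ in $\mathbb{C}^{*n}$.

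The paper avoids this by treating the two pieces separately. For $F(\Sing F)$ one runs the curve selection inside $\bigcup_s \Sing F_s\times\{s\}$, so $\lambda(t)\equiv 0$ and \eqref{eq:-7} is immediate, exactly as in Theorem~\ref{t:atyp_Newton}(b). For $S(F)$ one does \emph{not} attempt a direct Milnor-set argument at infinity; instead one applies Theorem~\ref{t:atyp_Newton}(a) to each $F_s$ to obtain $S(F_s)\subset\{F_s(0)\}\cup\bigcup_{\Delta\in\mathfrak B_s}F_{s,\Delta}(\Sing F_{s,\Delta}\cap\mathbb{C}^{*n})$, notes that the bad faces are constant near any $s_0$ because $\Gamma^+(F_s)$ is, and then reapplies the first part to the restricted family $F_{\Delta_0}$ (whose paths lie in $\Sing F_{s,\Delta_0}$, so again $\lambda\equiv 0$). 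Your argument can be repaired by following this two-step decomposition rather than merging the cases at the outset.
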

\begin{proof}
If $F(\Sing F)$ were not  bounded then, by the curve selection Lemma \ref{l:curve-selection}, there exist analytic
paths $\mathbf{z}(t)\in \mathbb{C}^{n}$, $\lambda(t)\in S^{1}$
and $s(t)\in \left[0,1\right]$ defined on a small enough interval
$\left]0,\varepsilon\right[$ such that
\begin{align}
\underset{t\rightarrow0}{\lim}\,\Vert\mathbf{z}(t)\Vert & =\infty,\,\underset{t\rightarrow0}{\lim}\, F(\mathbf{z}(t),\overline{\mathbf{z}}(t),s(t))=\infty,\label{eq:-17}\\
\underset{t\rightarrow0}{\lim}\, s(t)= & s_{0},\,\overline{\mathrm{d}F_{s(t)}}(\mathbf{z}(t),\overline{\mathbf{z}}(t))=\lambda(t)\overline{\mathrm{d}}F_{s(t)}(\mathbf{z}(t),\overline{\mathbf{z}}(t)).\label{eq:-18}
\end{align}

We may then apply the proof of Theorem \ref{t:atyp_Newton}(b) and find a face $\Delta$ of $\overline{\supp(F_{s(t)}^{I})}$, which by assumption is independent of $s$, such that $F_{s(t), \Delta}^{I}$ has a singularity in $\bC^{*n}$. By using Remark \ref{r:f_I}, this contradicts the Newton strong non-degeneracy.

To show that $S(F)$ is bounded, we proceed as follows. By Theorem \ref{t:atyp_Newton}(a), one has the inclusion
$S(F)\subset\cup_{s\in [0,1]}\{F_{s}(0)\}\cup_{s\in [0,1]}\underset{\Delta\in{\mathfrak{B}}_{s}}{\cup}F_{s,\Delta}(\Sing F_{s,\Delta}\cap\mathbb{C}^{*n})$
where ${\mathfrak{B}}_{s}$ is the set of bad faces of $\overline{\supp(F_{s})}$
for $s\in\left[0,1\right]$.
We have that $\cup_{s}\{F_{s}(0)\}$ is bounded by the continuity with respect to $s$, and that 
$\{{\mathfrak{B}}_{s}\}_{s\in [0,1]}$ is a finite set since  $\Gamma^{+}(F_{s})$
is independent of $s$.
If $S(F)$ were not bounded, then we may assume that $F_{\Delta_0(s)}(\Sing F_{\Delta_0(s)}\cap\mathbb{C}^{*n})$
is not bounded as $s\to s_0$, for some  bad face $\Delta_0(s)$ which is actually
independent of $s$ in some small enough interval $\left]s_{0}-\varepsilon,s_{0}+\varepsilon\right[$.
 Since $\Gamma^{+}(F_{s})$ is independent of $s$
and since $\Gamma^{+}(F_{s,\Delta_{0}})\subset\Gamma^{+}(F_{s})$, we
get that $\Gamma^{+}(F_{s,\Delta_{0}})$ is independent of $s$ within a neighbourhood of $s_0$. 
We may then apply the above proof to $F_{\Delta_{0}}$ in place of $F$.
\end{proof}

\begin{proposition}\label{p:r_0}
 Under the  assumption of Theorem
\ref{t:monodro_family}, there exists $r_0>0$ such that, for any $r\ge r_0$,  there exists $R_0(r) \gg 1$ such that one has the transversality
$f_{s}^{-1}(c)\pitchfork S_{R}^{2n-1}$,  $\forall c\in S_{r}^{1}$, $\forall R\geq R_{0}(r)$ and $\forall s\in\left[0,1\right]$.
\end{proposition}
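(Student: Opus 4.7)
The plan is to reduce the transversality question to the absence of Milnor-set points above a small circle, and then combine the boundedness output of Proposition \ref{p:family_bounded} with a uniform, family-version of the leading-order analysis used in the proof of Theorem \ref{t:atyp_Newton}(a). By Lemma \ref{l:M(f)geometric}, the non-transversality of $F_s^{-1}(c)$ with $S_R^{2n-1}$ at some $\mathbf{z}$ with $\|\mathbf{z}\|=R$ is equivalent to $\mathbf{z}\in M(F_s)$, so the whole statement reduces to showing that, for $r$ large enough, the set
\[
W_r := \{(\mathbf{z},s)\in\mathbb{C}^n\times[0,1]\mid \mathbf{z}\in M(F_s),\ |F_s(\mathbf{z},\overline{\mathbf{z}})|=r\}
\]
has bounded projection onto $\mathbb{C}^n$, uniformly in $s\in[0,1]$.

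First, I would set the threshold $r_0$ using Proposition \ref{p:family_bounded}: that proposition asserts the boundedness of $F(\Sing F)\cup S(F)$, and its proof in fact gives a bound $r_0'$ on the larger semi-algebraic set
\[
\bigcup_{s\in[0,1]}\{F_s(0)\}\ \cup\ \bigcup_{s\in[0,1]}\bigcup_{\Delta\in\mathfrak{B}_s} F_{s,\Delta}\bigl(\Sing F_{s,\Delta}\cap\mathbb{C}^{*n}\bigr),
\]
where $\mathfrak{B}_s$ is constant in $s$ since $\Gamma^+(F_s)$ is. I take $r_0>r_0'$.

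Next, I would fix $r\ge r_0$ and argue by contradiction: suppose the projection of $W_r$ onto $\mathbb{C}^n$ is unbounded. Since $W_r$ is semi-algebraic (it is defined by the Milnor-set inequalities of Definition \ref{d:milnorset} together with the equation $|F_s(\mathbf{z},\overline{\mathbf{z}})|^2=r^2$), the Curve Selection Lemma at infinity (Lemma \ref{l:curve-selection}) yields real analytic arcs $\mathbf{z}(t)\in\mathbb{C}^n$, $s(t)\in[0,1]$, $\lambda(t)\in\mathbb{R}$, $\mu(t)\in\mathbb{C}^*$, defined on some $]0,\varepsilon[$, with $\|\mathbf{z}(t)\|\to\infty$, $|F_{s(t)}(\mathbf{z}(t),\overline{\mathbf{z}}(t))|\equiv r$, $s(t)\to s_0\in[0,1]$, and satisfying the Milnor equation \eqref{eq:again} for $F_{s(t)}$. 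By compactness of $S_r^1$, after restricting the arc we may also assume $F_{s(t)}(\mathbf{z}(t),\overline{\mathbf{z}}(t))\to c_0$ with $|c_0|=r\neq 0$.

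The core step is to run the leading-order analysis of the proof of Theorem \ref{t:atyp_Newton}(a) verbatim, with $f$ replaced by $F_{s(t)}$. Writing $s(t)=s_0+\mathrm{h.o.t.}$ and expanding the coefficients of $F_{s(t)}$, the constancy of $\Gamma^+(F_s)$ ensures that the support of $F_{s(t)}$ is a fixed subset of $\mathbb{N}^n$, so the whole face machinery from \eqref{eq:-3}--\eqref{eq:-7} applies with the same polyhedron. Extracting leading terms produces, exactly as there, a face $\Delta\subset\overline{\supp F_{s_0}^I}$, a point $\mathbf{a}\in\mathbb{C}^{*I}$, and a unit $\mu_0\in S^1$ satisfying $\mu_0\overline{\mathrm{d}F_{\Delta,s_0}^I}(\mathbf{a},\overline{\mathbf{a}})+\overline{\mu}_0\overline{\d}F_{\Delta,s_0}^I(\mathbf{a},\overline{\mathbf{a}})=0$. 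Because $|c_0|=r\neq 0$, one is forced into case (II) of that proof, i.e.\ $d_{\mathbf{p}}=0$, $\Delta$ is a bad face of $\overline{\supp(F_{s_0})}$, and $c_0=F_{\Delta,s_0}(\mathbf{A},\overline{\mathbf{A}})\in F_{\Delta,s_0}(\Sing F_{\Delta,s_0}\cap\mathbb{C}^{*n})$. Thus $|c_0|\le r_0'<r_0\le r$, contradicting $|c_0|=r$. Hence the projection of $W_r$ is bounded; any bound $R_0(r)$ for the $\|\mathbf{z}\|$-coordinate then works for all $s\in[0,1]$ simultaneously.

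The main obstacle is verifying that the family-version of the leading-order argument in Step 4 really goes through uniformly in $s$: one needs the support of $F_{s(t)}$ and in particular the face $\Delta$ and the minimum $d_{\mathbf{p}}$ of $l_{\mathbf{p}}$ on $\overline{\supp F_{s(t)}^I}$ to be independent of $t$, so that the leading coefficients in \eqref{eq:-4} depend only on $s_0=\lim s(t)$ and \eqref{eq:-7} yields a singularity of $F_{\Delta,s_0}^I$ rather than some spurious object. This is precisely where the constancy of $\Gamma^+(F_s)$ in the family enters in an essential way, and it is the only place where the hypothesis of Theorem \ref{t:monodro_family} is used in the proof of this proposition.
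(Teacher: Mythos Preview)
Your proof is correct and follows essentially the same approach as the paper's: both fix $r_0$ from the bound produced in (the proof of) Proposition \ref{p:family_bounded}, argue by contradiction via the Curve Selection Lemma at infinity applied to the family Milnor set, and then rerun the leading-order analysis of Theorem \ref{t:atyp_Newton}(a) with the extra parameter $s(t)$ to produce a singular point $\mathbf{A}\in\mathbb{C}^{*n}$ of some $F_{s_0,\Delta}$. The only cosmetic difference is that the paper records the dichotomy ``$\Delta$ is either a face of $\Gamma^+(F_s)$ or a bad face'' and contradicts accordingly, whereas you compress this by saying ``one is forced into case (II)'', implicitly using the strong non-degeneracy hypothesis to eliminate the $\Gamma^+$ sub-case exactly as in the proof of Theorem \ref{t:atyp_Newton}(a); both are fine.
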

\begin{proof}
The above Proposition \ref{p:family_bounded} implies that there exists $r_{0}>0$ independent on $s\in [0,1]$ such that the following inclusion holds:
\begin{equation}\label{eq:bounded_in_disk}
  F(\Sing F)\cup_{s\in [0,1]}\{F_{s}(0)\}\cup_{s\in [0,1]}\underset{\Delta\in{\mathfrak{B}}_{s}}{\cup}F_{s,\Delta}(\Sing F_{s,\Delta}\cap\mathbb{C}^{*n})\subset\overset{\circ}{D}_{r_{0}}.
\end{equation}
If the above assertion were not true, then by Lemma \ref{l:curve-selection} there
exist analytic paths $\mathbf{z}(t)\subset\mathbb{C}^{n}$, $\lambda(t)\subset\mathbb{R}$, $\mu(t)\subset\mathbb{C}^{*}$ and $s(t)\subset\left[0,1\right]$
such that:
\begin{align}
\underset{t\rightarrow0}{\lim\,}\Vert\mathbf{z}(t)\Vert & =\infty,\,\underset{t\rightarrow0}{\lim\,}F(\mathbf{z}(t),\overline{\mathbf{z}}(t),s(t))=c\in S_{r}^{1}.\label{eq:-21}\\
\underset{t\rightarrow0}{\lim\,}s(t)= & s_{0},\,\lambda(t)\mathbf{z}(t)=\mu(t)\overline{\mathrm{d}F}(\mathbf{z}(t),\overline{\mathbf{z}}(t),s(t))+\overline{\mu}(t)\overline{\d}F(\mathbf{z}(t),\overline{\mathbf{z}}(t),s(t)).\label{eq:-22}\end{align}

By a similar analysis as in the proof of  Theorem \ref{t:atyp_Newton}(a) one finds a singular point $\mathbf{A}\in \bC^{*n}$ of $F_{\Delta}$ where $\Delta$ is either a
face of $\Gamma^{+}(F_{s})$ or a bad face of $\overline{\supp(F_{s})}$.
This contradicts \eqref{eq:bounded_in_disk}.
\end{proof}

\subsection{Proof of Theorem \ref{t:monodro_family}}

By the above two propositions,
for $r\ge r_0$, the global
monodromy fibration
$F_{s\mid}:F_{s}^{-1}(S_{r}^{1})\rightarrow S_{r}^{1}$
 is diffeomorphic to the fibration
\begin{equation}\label{eq:fib_infty}
F_{s\mid}:F^{-1}(S_{r}^{1})\cap B_{R}\rightarrow S_{r}^{1}
\end{equation}
for all  $R\ge R_0(r)$ and all $s\in\left[0,1\right]$.

Consider the map $\tilde F:\mathbb{C}^{n}\times I\rightarrow\mathbb{C}\times I$, $(\mathbf{z},s)\mapsto(F_{s}(\mathbf{z},\overline{\mathbf{z}}),s)$, where $I := [0, 1]$.

The above proposition show that the restriction $\tilde F_|:  \tilde F^{-1}(S_{r}^{1} \times I)\cap (B_{R}\times I) \to S_{r}^{1}
\times I$ is a proper submersion on the couple of manifolds $(\tilde F^{-1}(S_{r}^{1} \times I)\cap(B_{R}\times I), \tilde F^{-1}(S_{r}^{1} \times I)\cap (\partial B_{R}\times I))$.  Then Ehresmann's theorem tells that
the fibrations (\ref{eq:fib_infty}) are isotopic for varying $s$.

\fin

Theorem \ref{t:monodro_family} appears to be useful in finding the topology of the fibres in  Examples \ref{ex:1} and \ref{e:counterex}. 
As another consequence, one may extend the range of applicability of the stability theorems in \cite[Theorem 17]{NZ2} and \cite[Theorem 1.1]{Ph}, as follows:

\begin{corollary}\label{c:newton_constant}
If $f$ and $g$ are two Newton strongly non-degenerate mixed polynomials,
such that $\Gamma^{+}(f)=\Gamma^{+}(g)$ and that their restrictions to the boundaries at 
infinity $f_{\Gamma^+}$ and $g_{\Gamma^+}$  are both holomorphic (or both anti-holomorphic), then the monodromies at infinity
of $f$ and of $g$ are isotopic.
\end{corollary}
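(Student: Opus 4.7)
The plan is to reduce the statement to Theorem \ref{t:monodro_family} by building a piecewise real-analytic path in the space of Newton strongly non-degenerate mixed polynomials with Newton boundary at infinity constantly equal to $\Gamma^+$, joining $f$ to $g$. Since Theorem \ref{t:monodro_family} provides stability of the monodromy at infinity along any such analytic family, concatenating the resulting isotopies yields the claimed isotopy between the monodromy fibrations of $f$ and $g$.

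The first segment I would use is the linear deformation
\[
F_s := f_{\Gamma^+} + (1-s)(f - f_{\Gamma^+}), \qquad s\in[0,1],
\]
going from $F_0=f$ to $F_1=f_{\Gamma^+}$. The polynomial $f-f_{\Gamma^+}$ is supported on $\supp(f)\m\Gamma^+$, i.e.\ on lattice points that lie strictly below $\Gamma^+$; none of them is a vertex of $\Gamma_0(f)$ other than possibly $0$, so $\Gamma_0(F_s)=\Gamma_0(f)$ and hence $\Gamma^+(F_s)=\Gamma^+$ for all $s$. For every face $\Delta\subseteq\Gamma^+$ the restriction $(F_s)_\Delta=f_\Delta$ is independent of $s$, so Newton strong non-degeneracy of $f$ propagates to the entire family. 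The symmetric deformation $g_{\Gamma^+}+(1-s)(g-g_{\Gamma^+})$ joins $g_{\Gamma^+}$ to $g$ with the same properties and will serve as the third segment.

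For the middle segment I would connect $f_{\Gamma^+}$ to $g_{\Gamma^+}$ inside the holomorphic (respectively antiholomorphic) slice. By assumption both endpoints are holomorphic polynomials supported on $\Gamma^+$, with the same Newton boundary, and both are strongly non-degenerate. In the holomorphic setting strong non-degeneracy coincides with non-degeneracy (Remark \ref{r:strongly-nondeg}), and by Kushnirenko's theorem the locus of non-degenerate holomorphic polynomials with fixed Newton boundary $\Gamma^+$ is Zariski-open in the relevant complex projective space of coefficients. Its complement is a complex algebraic subvariety of real codimension $\ge 2$, so the locus is path-connected in the Euclidean topology, and I can select a real-analytic path $h_s$, $s\in[0,1]$, within it that joins $f_{\Gamma^+}$ to $g_{\Gamma^+}$. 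Every $h_s$ is then a Newton strongly non-degenerate mixed polynomial with $\Gamma^+(h_s)=\Gamma^+$. The antiholomorphic case follows by the same argument after conjugating the variables.

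Concatenating the three analytic segments yields the desired family, and Theorem \ref{t:monodro_family} applied on each one gives the conclusion. The step I expect to be the only genuinely delicate one is the middle segment: as emphasized in Remark \ref{r:strongly-nondeg} and \S\ref{ss:open}, in the ambient mixed setting the locus of Newton strongly non-degenerate polynomials with given $\Gamma^+$ is neither dense nor connected, so a general deformation need not stay in it. The hypothesis that both boundary restrictions are holomorphic (or both antiholomorphic) is precisely what confines the problem to a holomorphic slice where Kushnirenko's classical connectivity applies, thereby making the whole argument go through.
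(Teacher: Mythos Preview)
Your proof is correct and follows essentially the same three-segment strategy as the paper: deform $f$ to $f_{\Gamma^+}$ and $g$ to $g_{\Gamma^+}$ by the linear families $f_{\Gamma^+}+(1-s)(f-f_{\Gamma^+})$ (where the face restrictions are constant, so strong non-degeneracy is automatic), and bridge $f_{\Gamma^+}$ to $g_{\Gamma^+}$ using the Zariski-openness and connectedness of the non-degenerate locus in the holomorphic slice. The only cosmetic difference is that the paper invokes \cite[Theorem~1.1]{Ph} directly for the middle segment, whereas you construct an analytic path in that locus and apply Theorem~\ref{t:monodro_family} once more; both rely on the same connectivity fact.
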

\begin{proof}
In the holomorphic setting, the Newton strongly non-degenerate condition is the same as Newton non-degenerate and is a Zariski open and connected condition. This holds for anti-holomorphic instead of holomorphic. This fact allows us to connect $f$ to $g$ by a family of Newton strongly non-degenerate mixed polynomials. For instance, one may do as follows.
First, one applies \cite[Theorem 1.1]{Ph} to the restrictions $f_{\Gamma^+}$ and $g_{\Gamma^+}$. 
Next, we write $f = f_{\Gamma^+} + h$
and observe that the family of mixed polynomials $F_s := f_{\Gamma^+} + (1-s)h$ satisfies the hypotheses of  our Theorem \ref{t:monodro_family} and connects $f$ to $f_{\Gamma^+}$, hence the monodromy is stable in this family. A  similar construction for $g$ completes the picture and ends our proof. 
\end{proof}



\end{document}